\newtheorem{conj}{Conjecture}[section]
\newtheorem{prop}[conj]{Proposition}
\newtheorem{thm}[conj]{Theorem}
\newtheorem{lem}[conj]{Lemma}
\newtheorem{cor}[conj]{Corollary}
\theoremstyle{definition}
\newtheorem{remark}[conj]{Remark}
\newtheorem{example}[conj]{Example}
\DeclareFontFamily{OT1}{pzc}{}
\DeclareFontShape{OT1}{pzc}{m}{it}{<-> s * [1.10] pzcmi7t}{}
\DeclareMathAlphabet{\mathpzc}{OT1}{pzc}{m}{it}
\renewcommand{\vee}{*}
\newcommand{\q}{\mathfrak q}
\newcommand{\rr}{\mathfrak r}
\newcommand{\one}{\mathbf{1}}
\newcommand{\OO}{\mathcal O}
\newcommand{\PP}{\mathbb P}
\newcommand{\HH}{\mathbb H}
\newcommand{\QQ}{\mathbb Q}
\newcommand{\CCC}{\bar{\mathfrak C}}
\newcommand{\vi}{\mathbf i}
\DeclareMathOperator{\Aut}{Aut}
\DeclareMathOperator{\todd}{todd}
\DeclareMathOperator{\ext}{ext}
\DeclareMathOperator{\ch}{ch}
\DeclareMathOperator{\Quot}{Quot}
\DeclareMathOperator{\wt}{wt}
\begin{document}

\title{Universal Series for Hilbert Schemes and Strange Duality}
\author{Drew Johnson}
\address{Department of Mathematics, University of Oregon, Eugene}
\email{drewj@uoregon.edu}

\begin{abstract}
	We show how the ``finite Quot scheme method" applied to Le Potier's strange duality on del Pezzo surfaces leads to conjectures (valid for all smooth complex projective surfaces) relating two sets of universal power series on Hilbert schemes of points on surfaces: those for top Chern classes of tautological sheaves, and those for Euler characteristics of line bundles. We have verified these predictions computationally for low order. We then give an analysis of these conjectures in small ranks. We also give a combinatorial proof of a formula predicted by our conjectures: the top Chern class of the tautological sheaf on $S^{[n]}$ associated to the structure sheaf of a point is equal to $(-1)^n$ times the $n$th Catalan number.
\end{abstract}
\maketitle
\section{Introduction}
\subsection{Universal Series for Tautological Bundles on Hilbert Schemes}
Let $S$ be a smooth, projective surface over $\mathbb C$. The Hilbert scheme $S^{[n]}$ parametrizes unordered sets of $n$ points on $S$, as well as the possible non-reduced scheme structures that can occur when points collide. Given a vector bundle $F$ (or more generally, a class in the Grothendiek group $K_0$) on $S$, one can form the so-called \emph{tautological sheaf} $F^{[n]}$ on $S^{[n]}$ (see Section \ref{sec:taut-sheaves}). An important result of Ellingsrud, G\"{o}ttsche, and Lehn in \cite{ellingsrud_cobordism_2001} is that any Chern number formed from $F^{[n]}$ and the tangent bundle of $S^{[n]}$ is in fact a polynomial, independent of $S$, of the Chern numbers of $F$ and $S$. One can say more about the structure of these polynomials. We recall two important special cases here. 


\begin{thm}[\cite{ellingsrud_cobordism_2001}, Theorem 5.3] \label{thm:chi-series}
	For any surface $S$ and class $F$ in $K_0(S)$ of rank $r$, we have
	\[
	\sum_{n=1}^\infty \chi\left(\det F^{[n]}\right)z^n = g_r(z)^{\chi(\det F)} \cdot f_r(z)^{\frac{1}{2}\chi(\OO_S)} \cdot A_r(z)^{c_1(F).K_S-\frac{1}{2}K_S^2}\cdot B_r(z)^{K_S^2},
	\]
	where $A_r(z)$, $B_r(z)$, $f_r(z)$, $g_r(z)$ are power series in $z$ depending only on $r$, and
	\begin{align*}
	f_r(z) &= \sum_{k\ge0} \binom{(1-r^2)(k-1)}{ k} z^k \\
	g_r(z) &= \sum_{k\ge0} \frac{1}{1-(r^2-1)k}\binom{1-(r^2-1)k}{k} z^k.
	\end{align*}
	Furthermore $A_r(z)=B_r(z)=1$ for $r=-1,0,1$.
\end{thm}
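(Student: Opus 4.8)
The plan is to follow a cobordism-theoretic strategy, taking the universality statement for Hilbert schemes quoted above as given. The first step is to rewrite the left-hand side as an intersection number. Because the Chern character of the structure sheaf of the universal subscheme $\Xi_n\subset S^{[n]}\times S$ vanishes in codimensions $0$ and $1$ (that subscheme has codimension $2$), Grothendieck--Riemann--Roch shows that $c_1\big(\det F^{[n]}\big)=c_1\big(F^{[n]}\big)$ is a $\QQ$-linear combination of $c_1(F)$, of $K_S$, and of a universal ``boundary'' class on $S^{[n]}$, the coefficient of the last being proportional to $r=\operatorname{rk}F$; in particular no higher Chern class of $F$ occurs. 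Hirzebruch--Riemann--Roch then gives
\[
\chi\big(\det F^{[n]}\big)=\int_{S^{[n]}}\exp\big(c_1(F^{[n]})\big)\,\todd\big(T_{S^{[n]}}\big),
\]
so by universality $\chi(\det F^{[n]})$ is, for each fixed $r$, a polynomial in $n$ and the Chern numbers of $(S,F)$; and since $c_1(F^{[n]})$ depends on $F$ only through $r$ and $c_1(F)$, this polynomial involves only $c_1(F)^2$, $c_1(F).K_S$, $K_S^2$ and $c_2(S)$ (and not $c_2(F)$) — equivalently, by Noether's formula, $c_1(F)^2$, $c_1(F).K_S$, $K_S^2$ and $\chi(\OO_S)$.

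Next I would extract the product structure from multiplicativity under disjoint unions. For surfaces $S_1,S_2$ and classes $F_i$ of the same rank $r$, one has $(S_1\sqcup S_2)^{[n]}=\coprod_{a+b=n}S_1^{[a]}\times S_2^{[b]}$ with $\det(F_1\sqcup F_2)^{[n]}$ restricting to the exterior tensor product of $\det F_1^{[a]}$ and $\det F_2^{[b]}$; hence the generating series of $S_1\sqcup S_2$ is the product of those of $S_1$ and $S_2$, while the four invariants above are additive under $\sqcup$. Writing $Z_r(v)=1+\sum_{n\ge1}\chi(\det F^{[n]})z^n$, which coefficient by coefficient is a polynomial in $v=(c_1(F)^2,\ c_1(F).K_S,\ K_S^2,\ \chi(\OO_S))$, this gives $Z_r(v+v')=Z_r(v)\,Z_r(v')$ whenever $v,v'$ arise from genuine surface--bundle data; since such $v$ are Zariski dense in $\QQ^4$, the identity holds as a polynomial identity. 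Passing to the formal logarithm (each coefficient of $\log Z_r$ is again polynomial in $v$) and using additivity forces every coefficient of $\log Z_r$ to be $\QQ$-linear in $v$, so $Z_r(v)=\prod_{i=1}^{4}G_i(z)^{v_i}$ for power series $G_i\in 1+z\QQ[[z]]$ depending only on $r$. An invertible linear change of coordinates — expressing $v$ through $\chi(\det F)=\chi(\OO_S)+\tfrac12\big(c_1(F)^2-c_1(F).K_S\big)$, $\tfrac12\chi(\OO_S)$, $c_1(F).K_S-\tfrac12K_S^2$ and $K_S^2$ — recasts this product in exactly the displayed shape and thereby defines $g_r$, $f_r$, $A_r$, $B_r$.

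It remains to identify the four series, which is the crux of the matter. Here one uses that for any class $F$ of rank $r$ with $c_1(F)=c_1(L)$ one has $\chi(\det F^{[n]})=\chi\big(L_{(n)}\otimes(\det\OO_S^{[n]})^{\otimes r}\big)$, where $L\mapsto L_{(n)}$ is the natural homomorphism $\Pic(S)\to\Pic(S^{[n]})$; this reduces everything to Euler characteristics of explicit line bundles, and classes of every rank $r$ with a prescribed determinant certainly exist. Evaluating on the toric surfaces $\PP^2$ and $\PP^1\times\PP^1$ as $L$ ranges over all line bundles, the exponents $\chi(\det F)$ and $c_1(F).K_S-\tfrac12K_S^2$ vary quadratically and linearly in the degree of $L$; this variation, together with the pinned pairs $(K_S^2,\chi(\OO_S))=(9,1)$ and $(8,1)$, separates $g_r$, $f_r$, $A_r$ and $B_r$. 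On a toric surface $X$, the scheme $X^{[n]}$ carries a torus action with isolated fixed points (supported at the torus-fixed points of $X$, where they are monomial ideals), the line bundle $\det F^{[n]}$ is equivariant, and holomorphic Lefschetz writes $\chi(\det F^{[n]})$ as an explicit sum indexed by tuples of Young diagrams. I expect the main obstacle to be resumming these combinatorial expressions in closed form: one should distil from them the functional equation $z=\big(g_r(z)-1\big)\,g_r(z)^{\,r^2-1}$, whose Lagrange inversion reproduces $g_r(z)=\sum_{k\ge0}\tfrac{1}{1-(r^2-1)k}\binom{1-(r^2-1)k}{k}z^k$, and treat $f_r$ likewise through the companion generalized-binomial identities; the dependence of both series on $r$ only through $r^2$ should reflect a Serre-duality relation between ranks $r$ and $-r$. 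Finally, for $r\in\{-1,0,1\}$ the exponent $r^2-1$ vanishes and these computations degenerate: for $r=0$, $F=0$ one has, independently, $Z_0(z)=\sum_{n\ge0}\chi(\OO_{S^{[n]}})z^n=(1-z)^{-\chi(\OO_S)}$, matching $g_0=(1-z)^{-1}$ and $f_0=1$, while the analogous direct computations on $\PP^2$ and $\PP^1\times\PP^1$ for $r=\pm1$ (where $Z_{\pm1}=(1+z)^{\chi(\det F)}$) show that the exponents of $A_r$ and $B_r$ have no effect there, i.e. $A_r=B_r=1$.
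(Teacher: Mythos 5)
The paper does not actually prove this statement: it is quoted verbatim from \cite{ellingsrud_cobordism_2001}, Theorem 5.3, so there is no in-paper proof to compare against, and your argument has to stand on its own. Its structural skeleton is sound and is essentially the standard one: Grothendieck--Riemann--Roch does show that $c_1(F^{[n]})$ depends on $F$ only through $r$ and $c_1(F)$ (equivalently $\det F^{[n]}=L_{(n)}\otimes(\det\OO_S^{[n]})^{\otimes r}$ for $L=\det F$), the universality theorem of \cite{ellingsrud_cobordism_2001} then makes $\chi(\det F^{[n]})$ a universal polynomial in $c_1(F)^2$, $c_1(F).K_S$, $K_S^2$, $\chi(\OO_S)$, multiplicativity under disjoint union plus Zariski density of realizable Chern data forces the product form, and the change of exponents to $\chi(\det F)$, $\tfrac12\chi(\OO_S)$, $c_1(F).K_S-\tfrac12K_S^2$, $K_S^2$ is invertible. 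Your observation that the exponent vectors coming from $\PP^2$ and $\PP^1\times\PP^1$ with varying $L$ span $\QQ^4$ is also correct and matches the paper's own remark to that effect.

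The genuine gap is exactly where you flag it, and it is the crux of the theorem: the closed formulas for $f_r$ and $g_r$. You propose to obtain them by resumming the holomorphic Lefschetz fixed-point sums over tuples of monomial ideals on toric surfaces and ``distilling'' the functional equation $z=(g_r(z)-1)\,g_r(z)^{r^2-1}$. The target is right --- that functional equation is equivalent, via Lagrange inversion, to the stated binomial sum, and is consistent with the substitution $z=u(1+u)^{r^2-1}$, $g_r=1+u$ used in Section \ref{sec:s=-1} of this paper --- but you never derive it; you only name the resummation as an expected obstacle. That resummation over multipartitions is not a routine step and is not how the closed forms were obtained in \cite{ellingsrud_cobordism_2001}: there, localization on toric surfaces is only used to compute $A_r$ and $B_r$ order by order (which is why no closed form for those two series is known), while $f_r$ and $g_r$ are pinned down by a separate computation of $\chi\bigl(L_{(n)}\otimes(\det\OO_S^{[n]})^{\otimes r}\bigr)$ that isolates the $K$-trivial contribution before Lagrange inversion enters. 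Relatedly, the rank $\pm1$ input you invoke to kill $A_{\pm1},B_{\pm1}$, namely $\chi(\det L^{[n]})=\binom{\chi(L)}{n}$, is itself a nontrivial theorem that you assert rather than prove (the $r=0$ case $\chi(\OO_{S^{[n]}})=\binom{\chi(\OO_S)+n-1}{n}$ is the only one you can take as classical). Until the identification step is actually carried out, what you have is a proof of the existence of the four universal series, not of the displayed formulas for $f_r$ and $g_r$.
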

Explicit formulas for $A_r(z)$ and $B_r(z)$ are not know, even conjecturally, although they can be computed for low order. We remark that any line bundle on $S^{[n]}$ can be written as $\det F^{[n]}$, so in fact this theorem contains the Euler characteristics of all line bundles on Hilbert schemes.

Next, the following can be deduced from Theorem 4.2 of \cite{ellingsrud_cobordism_2001}:
\begin{thm} \label{thm:c2n-series} For any surface $S$ and any class $F \in K_0(S)$ of rank $s$, we have
\[
\sum_{n=0}^{\infty}  c_{2n}(F^{[n]}) w^n= V_s(w)^{c_2(F)} \cdot W_s(w)^{\chi(\det F)}\cdot X_s(w)^{\frac12\chi(\mathcal O_S)} \cdot Y_s(w)^{c_1(F).K_S - \frac12{K^2}} \cdot Z_s(w)^{K_S^2} 
\]
where $V_s(w),W_s(w),X_s(w),Y_s(w),Z_s(w)$ are power series in $w$ depending only on $s$.
\end{thm}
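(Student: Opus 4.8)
The plan is to obtain the formula as a formal consequence of the universality and multiplicativity of tautological integrals established in \cite{ellingsrud_cobordism_2001}, in exactly the way that Theorem~\ref{thm:chi-series} is extracted there from Theorem~4.2. Throughout I read $c_{2n}(F^{[n]})$ as the number $\int_{S^{[n]}}c_{2n}(F^{[n]})$, which makes sense because $\dim_{\mathbb C}S^{[n]}=2n$, so $c_{2n}(F^{[n]})\in H^{4n}(S^{[n]})$ is a top-degree class; the fact that the degree of the integrand grows with $n$ is not an obstruction, this being precisely the situation Theorem~4.2 of \cite{ellingsrud_cobordism_2001} is built to handle. That theorem then gives, for each fixed rank $s$ and each $n$, that $\int_{S^{[n]}}c_{2n}(F^{[n]})$ equals a universal polynomial $P_{n,s}$, with coefficients independent of $S$, in the five numerical invariants $c_1(F)^2$, $c_2(F)$, $c_1(F).K_S$, $K_S^2$, $c_2(S)$ of the pair $(S,F)$ --- and this holds for $F$ an arbitrary class in $K_0(S)$, just as in Theorem~\ref{thm:chi-series}. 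Using Noether's formula $c_2(S)=12\chi(\OO_S)-K_S^2$ and Riemann--Roch $\chi(\det F)=\tfrac12 c_1(F)(c_1(F)-K_S)+s\,\chi(\OO_S)$ --- here $s$ being fixed is used --- this five-tuple is carried by an invertible $\QQ$-linear change of variables into the tuple $\vec c=\big(c_2(F),\,\chi(\det F),\,\chi(\OO_S),\,c_1(F).K_S-\tfrac12 K_S^2,\,K_S^2\big)$ appearing in the statement. Putting $\Phi_s(w;\vec c)=\sum_{n\ge0}P_{n,s}(\vec c)\,w^n$, and noting $P_{0,s}=1$, the series $\log\Phi_s$ is a well-defined element of $\QQ[[w]]$ whose coefficients are polynomials in $\vec c$.

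Next I would check multiplicativity under disjoint unions. If $S=S_1\sqcup S_2$ with $F$ restricting to rank-$s$ classes $F_i$ on $S_i$, then $S^{[n]}=\coprod_{n_1+n_2=n}S_1^{[n_1]}\times S_2^{[n_2]}$, and on each factor $F^{[n]}$ restricts to $p_1^*F_1^{[n_1]}+p_2^*F_2^{[n_2]}$ in $K$-theory, so $c(F^{[n]})=p_1^*c(F_1^{[n_1]})\cdot p_2^*c(F_2^{[n_2]})$. Extracting the degree-$2n$ part and integrating over the product of dimensions $2n_1$ and $2n_2$ isolates the bidegree $(2n_1,2n_2)$ term, whence $\int_{S^{[n]}}c_{2n}(F^{[n]})=\sum_{n_1+n_2=n}\big(\int_{S_1^{[n_1]}}c_{2n_1}(F_1^{[n_1]})\big)\big(\int_{S_2^{[n_2]}}c_{2n_2}(F_2^{[n_2]})\big)$. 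Since every coordinate of $\vec c$ is additive under disjoint unions, this reads $\Phi_s(w;\vec c_1+\vec c_2)=\Phi_s(w;\vec c_1)\,\Phi_s(w;\vec c_2)$ for $\vec c_1,\vec c_2$ realized by rank-$s$ pairs; taking $k$ disjoint copies of a single pair gives $\Phi_s(w;k\vec c)=\Phi_s(w;\vec c)^k$ for all integers $k\ge1$.

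Finally I would run the standard exponentiation argument. Write $\log\Phi_s(w;\vec c)=\sum_{n\ge1}Q_n(\vec c)\,w^n$ with $Q_n$ polynomial. The relation $\Phi_s(w;k\vec c)=\Phi_s(w;\vec c)^k$ forces $Q_n(k\vec c)=k\,Q_n(\vec c)$ for all $k\ge1$ and all $\vec c$ coming from a rank-$s$ pair. Using $K_0$-classes (rank-$s$ bundles together with structure sheaves of points and of curves, on blow-ups of $\PP^2$ and of $\PP^1\times C$ with the genus of $C$ varying to move $\chi(\OO_S)$), these $\vec c$ span $\QQ^5$, hence are Zariski dense --- this is the same classical realizability input already used for Theorem~\ref{thm:chi-series} --- so $Q_n(kx)=k\,Q_n(x)$ is a polynomial identity in $x$ for each $k$. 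Splitting $Q_n$ into homogeneous parts then forces it to be linear and homogeneous, $Q_n(\vec c)=\sum_{j=1}^5 a^{(s)}_{n,j}c_j$. Therefore $\log\Phi_s=\sum_{j=1}^5 c_j\,L_{s,j}(w)$ with $L_{s,j}(w)=\sum_n a^{(s)}_{n,j}w^n$, and exponentiating, $\Phi_s=\prod_{j=1}^5\exp\!\big(c_j L_{s,j}(w)\big)$; setting $V_s=\exp(L_{s,1})$, $W_s=\exp(L_{s,2})$, $X_s=\exp(2L_{s,3})$, $Y_s=\exp(L_{s,4})$, $Z_s=\exp(L_{s,5})$ (all of which lie in $1+w\QQ[[w]]$, so the fractional power $X_s^{\frac12\chi(\OO_S)}$ makes sense as in Theorem~\ref{thm:chi-series}) yields the asserted formula. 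The only genuinely non-routine point is the first step: one must invoke Theorem~4.2 of \cite{ellingsrud_cobordism_2001} in a form that covers the integrand $c_{2n}(F^{[n]})$ for a $K_0$-class $F$ of arbitrary rank, and pin down the correct change of coordinates into $\vec c$; everything afterward is bookkeeping identical to the $\chi(\det F^{[n]})$ case. Alternatively, one can bypass the explicit density check altogether by appealing directly to the group-like structure of the cobordism ring of surfaces equipped with one $K$-theory class, which is where the product form really comes from.
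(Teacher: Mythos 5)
Your argument is correct and is exactly the deduction the paper has in mind: the paper offers no proof of Theorem~\ref{thm:c2n-series} beyond the citation of Theorem~4.2 of \cite{ellingsrud_cobordism_2001}, and the universality--multiplicativity--exponentiation chain you spell out is the standard way that result yields the product form, in parallel with Theorem~\ref{thm:chi-series}. One small slip that does not affect the argument: since $\det F$ is a line bundle, Riemann--Roch gives $\chi(\det F)=\tfrac12 c_1(F)\big(c_1(F)-K_S\big)+\chi(\OO_S)$ with coefficient $1$, not $s$, on $\chi(\OO_S)$; the linear change of variables you need remains invertible either way.
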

In Theorem \ref{thm:c2n-series}, none of the series are known in general.

One might ask whether there is any relationship between the series in Theorems \ref{thm:chi-series} and \ref{thm:c2n-series}. This paper will explain how such a relationship is suggested by Le Potier's strange duality for surfaces.

In Section \ref{sec:sd}, we give the background for strange duality and the ``finite Quot scheme method" which was first used by Marian and Oprea for curves in \cite{marian_level-rank_2007} and later used for surfaces by Bertram, Goller, and the author in \cite{bertram_potiers_2016}. 
In Section \ref{sec:strange-hilbert} we specialize to the case where one of the moduil spaces is $S^{[n]}$. Here the finite Quot scheme method compares the length of a certain finite Quot scheme on $S$ to the dimension of the space of sections of a line bundle on $S^{[n]}$. We explain how the length of the finite Quot scheme can be viewed as a $2n$-codimensional Chern class of a certain tautological bundle. From this, we derive in Section \ref{sec:the-conj} the following:

\begin{conj} \label{conj:series}
Let $\phi_s(w) = V_s(w)^{2-s}$. Define a change of variables between $z$ and $w$ via $w=z\phi_s(w)$, and also let $s = r+1$. Then we have the following: 

\begin{align}
g_r(z) &= V_s(w)\cdot W_s(w) \tag{C1}\label{C1}\\
f_r(z) &= \frac{X_s(w)}{\phi_s(w)^4\left(\frac{dz}{dw}\right)^2} \tag{C2}\label{C2}\\ 
A_r(z) &= Y_s(w)\tag{C3} \label{C3} \\
B_r(z) &= Z_s(w) \tag{C4}\label{C4}
\end{align}
\end{conj}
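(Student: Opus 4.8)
The plan is to obtain the four relations as a (still conditional) consequence of Le Potier's strange duality on del Pezzo surfaces together with the finite Quot scheme method recalled in Sections~\ref{sec:sd}--\ref{sec:strange-hilbert}, and then to propagate the resulting power-series identities to every smooth projective surface using the universality built into Theorems~\ref{thm:chi-series} and~\ref{thm:c2n-series}. Fix a del Pezzo surface $S$ and arrange the strange duality diagram so that one moduli space is a Hilbert scheme $S^{[n]}$ and the other is a moduli space $M$ of sheaves of rank $s=r+1$. On the $S^{[n]}$ side the relevant theta line bundle can be written as $\det G^{[n]}$ for a class $G$ of rank $r$ read off from the strange duality data; since $S$ is del Pezzo and the line bundle is suitably positive, higher cohomology vanishes and $h^0(\det G^{[n]})=\chi(\det G^{[n]})$, a quantity governed by $g_r,f_r,A_r,B_r$ of Theorem~\ref{thm:chi-series}. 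On the $M$ side, the finite Quot scheme method identifies the dimension of the relevant space of sections with the length of a $0$-dimensional Quot scheme $\Quot$ parametrizing quotients of a fixed rank-$s$ bundle $E$ whose numerical invariants are arranged so that $\Quot$ is finite.

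The heart of the argument is to compute $\ell(\Quot)$ in terms of Hilbert schemes. The map sending a quotient $E\to I_Z(D)$ to its support $Z$ exhibits $\Quot$ over $S^{[\bullet]}$, and one wants to identify $\Quot$ — or each stratum of it, after stratifying by the length of the torsion part of the quotient, equivalently by $c_2$ of the kernel — with the zero locus of a section of a tautological bundle $F^{[m]}$ of rank $2m$ on $S^{[m]}$, so that each stratum contributes $c_{2m}(F^{[m]})$ for a class $F$ of rank $s$ built from $E$. Summing the strata and then over $n$ should produce a master identity equating $\sum_n\chi(\det G^{[n]})\,z^n$ with an expression in $V_s,W_s,X_s,Y_s,Z_s$ of Theorem~\ref{thm:c2n-series}. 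Because the Hilbert-scheme index $m$ on the Quot side and the index $n$ on the $\chi$ side are linked through the invariants that strange duality forces — with $V_s(w)^{c_2(F)}$ effectively acting as a per-point weight — reorganizing this sum is a Lagrange-inversion problem, and this is precisely where the substitution $w=z\,\phi_s(w)$ with $\phi_s=V_s^{2-s}$ enters; the exponent $2-s$ and the Jacobian factors $\phi_s(w)^4\left(\frac{dz}{dw}\right)^2$ in \eqref{C2} are then dictated by the geometry of the stratification and the degrees of the classes being matched (with the known low-order expansions serving as a check).

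Given the master identity, I would take logarithms. On both sides the exponents are linear in the independent invariants $\chi(\det G)$, $\chi(\OO_S)$, $c_1(G).K_S-\frac{1}{2}K_S^2$, and $K_S^2$ — the $c_2(F)$-dependence on the right having already been absorbed into the change of variables, consistent with the relation $s=r+1$. Matching the coefficient of each invariant separately then yields \eqref{C1}--\eqref{C4}, and individual instances (e.g.\ taking $G$ trivial, or special classes such as $\OO_p$) can be checked against directly computable formulas. Finally, since every coefficient in these four identities is universal by Theorems~\ref{thm:chi-series} and~\ref{thm:c2n-series}, an equality of formal power series valid for every del Pezzo surface $S$ — whose invariants already sweep out enough values — is valid for every smooth projective surface.

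I expect the main obstacle to be this middle step, in two respects. First, making rigorous the identification of $\ell(\Quot)$ with the stratified sum of tautological top Chern classes carrying the stated $w$-substitution requires controlling the scheme structure of $\Quot$ along each stratum and recognizing the relevant normal/obstruction bundle as a tautological bundle; the Lagrange-inversion bookkeeping that produces $\phi_s$ and the factor in \eqref{C2} must come out of that geometry rather than be reverse-engineered from the computed data. Second, the finite Quot scheme method presently supplies the needed dimension equality only in the cases where Le Potier's strange duality is actually known on del Pezzo surfaces, so absent a general strange duality theorem the derivation remains conditional — which is exactly why the statement is posed as a conjecture and, for now, only verified in low order.
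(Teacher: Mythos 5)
Your overall architecture --- the finite Quot scheme method on a del Pezzo surface, a Lagrange-inversion reorganization, matching the exponents of the four independent invariants, and then propagating by universality to all surfaces --- is the same as the paper's, and your last paragraph correctly isolates where the real work lies. But the middle step, which you leave as ``dictated by the geometry of the stratification,'' is exactly where your proposal diverges from and falls short of the paper, and the paper's resolution is both cleaner and essential to getting the specific shape of \eqref{C1}--\eqref{C4}. There is no stratification by the torsion of the quotient and no identification with zero loci of sections of rank-$2m$ tautological bundles. With $f=(1,0,-n)$ the quotients of $V_n$ are honest ideal sheaves $I_Z$; a quotient $V_n\to I_Z$ is the same as a section of $V_n^*$ vanishing along $Z$; and so the whole Quot scheme is the degeneracy locus of the single evaluation map $H^0(S,V_n^*)\otimes\OO_{S^{[n]}}\to (V_n^*)^{[n]}$ on $S^{[n]}$. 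Since $(V_n^*)^{[n]}$ has rank $ns$ and $h^0(V_n^*)=\chi(V_n^*)=ns-2n+1$, Thom--Porteous gives the expected length as the single class $c_{2n}\bigl((V_n^*)^{[n]}\bigr)$ --- note this is not a top Chern class unless $s=2$, contrary to your ``rank $2m$'' description.

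The exponent $2-s$ and the Jacobian factor are then computed, not reverse-engineered. The orthogonality $\chi(e_n,(1,0,-n))=0$ forces $c_2(V_n^*)=\chi(L)-(n-1)(r-1)$, i.e.\ the second Chern class of the kernel bundle depends linearly on $n$. Feeding this into Theorem \ref{thm:c2n-series} turns the factor $V_s(x)^{c_2(V_n^*)}$ into $V_s(x)^{\chi(L)}\cdot\bigl(V_s(x)^{2-s}\bigr)^{n-1}$, which is exactly the $[x^n]\,\psi(x)\phi(x)^n$ shape required by Lagrange inversion with $\phi_s=V_s^{2-s}$; the formula $f(z)=\psi(w)/\bigl(\phi(w)\tfrac{dz}{dw}\bigr)$ then produces a factor $\phi_s(w)^{-2}\bigl(\tfrac{dz}{dw}\bigr)^{-1}$ which, paired with the exponent $\tfrac12\chi(\OO_S)=\tfrac12$ of $X_s$ on a del Pezzo surface, is precisely $\bigl(\phi_s(w)^4\bigl(\tfrac{dz}{dw}\bigr)^2\bigr)^{-1/2}$ as in \eqref{C2}. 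Without identifying the $n$-linearity of $c_2(V_n^*)$ you cannot derive the change of variables, so as written your proposal does not actually produce the conjecture's specific form. (Your closing caveat is right in spirit: even the paper's argument establishes only that Conjecture \ref{conj:series} is equivalent to Conjecture \ref{conj:c2n=chi}, the expected but unproven equality of the Quot-scheme length with the theta-bundle Euler characteristic --- which is why the statement remains a conjecture.)
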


In Section \ref{sec:small-rank}, we look at some special cases for small values of $s$ and $r$ and show how they support our conjecture. Most notably, we show how our conjecture is consistent with the computation of the top Segre class of tautological sheaves associated to line bundles by Marian, Oprea, and Pandharipande in  \cite{marian_segre_2015}.

Finally, we  use the combinatorial description by Lehn \cite{lehn_chern_1999} of the cohomology of $S^{[n]}$ in terms of $\q$-operators to prove  a novel formula predicted by our conjectures. We obtain in Theorem 6.1 that for any smooth projective surface $S$
\[
c_{2n}(\mathcal O_p^{[n]}) = (-1)^n C_n
\]
where $\OO_p^{[n]}$ is the tautological sheaf  on $S^{[n]}$ associated to $\OO_p$, the structure sheaf of a point, and $C_n$ is the $n$th Catalan number.

As additional support for our conjecture, we have used Sage \cite{sage} to  implement an algorithm based on  \cite{lehn_chern_1999} and computed the series of Theorem \ref{thm:c2n-series} up to order six. The series of Theorem \ref{thm:chi-series} were first computed in \cite{ellingsrud_cobordism_2001}, and to higher order in \cite{bertram_potiers_2016} and \cite{johnson_two_2016}. The computations agree with Conjecture \ref{conj:series}. 

\subsection{Further Progress} Using a preprint of this work and the accompanying numerical data, Marian, Oprea, and Pandharipande have obtained predictions for $V_s(w)$, $W_s(w)$, and $X_s(w)$ that are consistent with Conjecture \ref{conj:series}. See \cite{marian_combinatorics_2017}.

\section{Strange Duality} \label{sec:sd}
\subsection{Set up} \label{sec:set-up}

Let $S$ be a smooth, projective del Pezzo surface over $\mathbb C$ with canonical 
class $K_S$. Let
$$ e,f \in H^*(S,{\mathbb Q}) =  H^0(S,{\mathbb Q}) \oplus H^2(S,{\mathbb Q}) \oplus H^4(S,{\mathbb Q}) $$ 
be cohomology classes that are orthogonal with respect to the {\it Mukai pairing}
\[
\chi(e,f) = \int_S e^\vee \cup f \cup \todd(S),
\]
where we write $e = (e_0,e_1,e_2)$, $e^\vee = (e_0,-e_1,e_2)$, and 
$\todd(S) = (1,-{K_S}/2,1)$ is the Todd class of  $S$. Using this pairing, the Hirzebruch-Riemann-Roch Theorem implies that for $E$ and $F$ coherent sheaves on $S$, the Chern characters pair as
\[
\chi\left(\ch(E),\ch(F)\right) = \chi(E,F) = \sum (-1)^i\ext^i(E,F).
\]

Assuming that the moduli spaces $M_S(e^{\vee})$ and $M_S(f)$ of Gieseker-semistable coherent sheaves are non-empty, the orthogonality of $e$ and $f$, together with some other mild conditions, implies that the ``jumping locus''
\[
\Theta = \{\, (\hat{E},F) \mid h^0(\hat{E} \otimes F) > 0 \,\} \subset M_S(e^{\vee}) \times M_S(f)
\]
has the structure of a Cartier divisor (see \cite{le_potier_dualite_2005}, \cite{scala_dualite_2007}, \cite{danila_resultats_2002}). One also has divisors 
\[
\Theta_F = \{[\hat E]: h^0(\hat E \otimes F) >0\} \subset M_S(e^\vee), \;\;\;\; \Theta_{\hat E} = \{[F]:h^0(\hat E \otimes F) >0\} \subset M_S(f). 
\] 
As the Picard group of $S$ is discrete, the $\Theta_F$ are all linearly equivalent as $F$ varies in $M_S(f)$; we refer to the associated line bundle as $\mathcal O(\Theta_f)$ and similarly for $\mathcal O(\Theta_{e^\vee})$.
One can then check that the  line bundle associated to $\Theta$ satisfies
\[
 {\mathcal O}_{M_S(e^{\vee})\times M_S(f)}(\Theta) = \pi_1^*{\mathcal O}_{M_S(e^{\vee})}(\Theta_f)\otimes \pi_2^*{\mathcal O}_{M_S(f)}(\Theta_{e^{\vee}})
\]
Thus
$$H^0\big(M_S(e^{\vee}) \times M_S(f), {\mathcal O}(\Theta)\big) = H^0\big(M_S(e^{\vee}),{\mathcal O}(\Theta_f)\big) \otimes 
H^0\big(M_S(f),{\mathcal O}(\Theta_{e^{\vee}})\big),$$
so a section defining $\Theta$ determines a map (well-defined up to a choice of a (non-zero) scalar):
\begin{equation}
\operatorname{SD}_{e,f}: H^0\big(M_S(f),{\mathcal O}(\Theta_{e^{\vee}})\big)^* \rightarrow H^0\big(M_S(e^{\vee}),{\mathcal O}(\Theta_f)\big) \label{eq:sd}
\end{equation}
One could interpret $\operatorname{SD}_{e,f}$ as taking the hyperplane in $H^0\big(M_S(f),{\mathcal O}(\Theta_{e^{\vee}})\big)$ determined by a point $[F] \in M_S(f)$ to the section (up to scaling) $\Theta_F \in H^0\big(M_S(e^{\vee}),{\mathcal O}(\Theta_f)\big)$. 

\begin{conj}[Le Potier's Strange Duality] $\operatorname{SD}_{e,f}$ is an isomorphism.
\end{conj}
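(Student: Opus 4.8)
The plan is to attack this by the ``finite Quot scheme method'' foreshadowed in the introduction, which reduces the isomorphism statement to a numerical comparison together with a one-sided injectivity check. First I would establish that the two spaces of sections have the same dimension, i.e. $h^0\big(M_S(f),\mathcal O(\Theta_{e^\vee})\big) = h^0\big(M_S(e^\vee),\mathcal O(\Theta_f)\big)$. On a del Pezzo surface one expects the theta line bundles to be sufficiently positive that their higher cohomology vanishes, so this reduces to a Riemann--Roch / Verlinde-type computation of the two Euler characteristics $\chi\big(M_S(f),\mathcal O(\Theta_{e^\vee})\big)$ and $\chi\big(M_S(e^\vee),\mathcal O(\Theta_f)\big)$, which are built to be symmetric in $e$ and $f$ through the Mukai pairing and the orthogonality $\chi(e,f)=0$. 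Granting this, it suffices to prove that $\operatorname{SD}_{e,f}$ is injective (equivalently surjective).

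Second, I would reinterpret injectivity geometrically: $\operatorname{SD}_{e,f}$ is injective precisely when the theta sections $\{\Theta_F : [F]\in M_S(f)\}$ span $H^0\big(M_S(e^\vee),\mathcal O(\Theta_f)\big)$. To produce enough independent such sections I would choose an auxiliary class and a quotient presentation so that the associated Quot scheme of quotients with the prescribed invariants is finite---ideally finite and reduced---of the expected length. A deformation-theoretic computation of its virtual dimension, again using $\chi(e,f)=0$, should make this length equal to the Euler characteristic from the first step; evaluating the theta sections at the points of this $\Quot$ scheme then produces a square pairing matrix, and its nondegeneracy forces $\operatorname{SD}_{e,f}$ to have full rank, hence to be an isomorphism. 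In the case relevant to this paper, where one of the moduli spaces is a Hilbert scheme $S^{[n]}$, the length of this $\Quot$ scheme is exactly the $2n$-codimensional Chern class of a tautological bundle, so the required nonvanishing becomes a statement about a coefficient of the universal series of Theorem \ref{thm:c2n-series}---which is where combinatorial input such as the Catalan-number identity enters.

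The main obstacle, and the reason the statement remains a conjecture, is controlling the $\Quot$ scheme: one must verify that for a sufficiently generic choice of data it is genuinely zero-dimensional, reduced, and supported away from the strictly semistable and singular loci of both moduli spaces, and that the tautological evaluation at its points is nondegenerate rather than accidentally rank-deficient. This demands strong positivity and a delicate genericity argument, which is exactly why the del Pezzo hypothesis---and the attendant Kawamata--Viehweg-type vanishing and good behaviour of $M_S(e^\vee)$, $M_S(f)$---is essential; for general $e,f$ even the numerical matching in the first step is not known unconditionally. A secondary difficulty is that a finite $\Quot$ presentation of the right kind exists only for certain ranks and Chern classes, so a complete proof would additionally need a wall-crossing or inductive argument to propagate the isomorphism across all admissible pairs $(e,f)$.
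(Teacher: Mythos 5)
The statement you are addressing is stated in the paper as a \emph{conjecture} (Le Potier's strange duality), and the paper contains no proof of it; the entire paper is premised on its being open. So the relevant question is not how your argument compares to the paper's proof, but whether your proposal closes the gap --- and it does not, as you yourself concede in your final paragraph. What you have written is essentially a faithful paraphrase of the paper's own Section \ref{sec:finite-quot} (the finite Quot scheme method of Marian--Oprea, later applied to surfaces by Bertram, Goller, and Johnson), which the paper presents explicitly as a conditional strategy: \emph{if} a sufficiently general $V$ exists making $\Quot(V,f)$ finite and reduced with all the stated vanishing and stability properties, \emph{and if} its length matches the relevant dimension, then one deduces surjectivity or injectivity. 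None of those hypotheses is verified, and identifying them as "the main obstacle" is an accurate diagnosis, not a proof.

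Two specific points where your sketch is more optimistic than the known state of affairs. First, your opening step --- establishing $h^0\bigl(M_S(f),\mathcal O(\Theta_{e^\vee})\bigr) = h^0\bigl(M_S(e^\vee),\mathcal O(\Theta_f)\bigr)$ via a "Riemann--Roch / Verlinde-type computation" --- is not available: the paper states at the start of Section \ref{sec:strange-hilbert} that the dimensions of both sides of the duality map are unknown in general, and there is no Verlinde formula for these moduli spaces of sheaves on surfaces. The only case where one side is computable is $f=(1,0,-n)$, where $M_S(f)=S^{[n]}$ and Theorem \ref{thm:chi-series} applies; even there the other side is out of reach, which is why the method can at best prove one-sided statements (surjectivity \emph{or} injectivity, depending on which dimension the Quot length matches). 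Second, the identification of the Quot scheme length with $c_{2n}\bigl((V_n^*)^{[n]}\bigr)$ is itself only an expected count --- it is precisely Conjecture \ref{conj:c2n=chi} --- and the paper uses it to generate predictions about universal series rather than to establish duality. Your proposal is a correct account of the intended line of attack and of why it currently fails, but it should be presented as a research program, not as a proof.
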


\subsection{Finite Quot Schemes} \label{sec:finite-quot}

The finite Quot scheme idea was originally in \cite{marian_level-rank_2007}. The argument is as follows.   

Let $v = e + f$ be the Chern character of a direct sum $E \oplus F$ of coherent sheaves of orthogonal Chern characters $e$ and $f$, and suppose 
$V$ is a coherent sheaf with $\ch(V) = v$. 
Then each element of the Grothendieck quot scheme $\Quot(V,f)$ of coherent sheaf quotients of $V$ of class $f$ gives rise to an exact sequence
\begin{equation}
0 \rightarrow E \rightarrow V \rightarrow F \rightarrow 0 \label{eq:evf}
\end{equation}
and $\chi(E,F) = 0$ is the expected dimension of $\Quot(V,f)$. Moreover, if
$$\hom(E,F) = \ext^1(E,F) = \ext^2(E,F) = 0$$
then the point of $\Quot(V,f)$ corresponding to \eqref{eq:evf} is isolated and reduced. 

Now suppose a sufficiently general $V$ may be chosen so that $\Quot(V,f)$ is finite and that each quotient
\begin{equation}
0 \rightarrow E_i \rightarrow V \rightarrow F_i \rightarrow 0 \label{eq:evfi}
\end{equation}
has the property that $E_i$ and $F_i$ are Gieseker-semistable, that $\ext^j(E_i,F_i) = 0$ for all $i,j$, and that $E_i$ is locally free.\footnote{Local freeness will be automatic since $V$ will be locally free and the $F_i$ will be torsion free.}  The condition $\operatorname{Hom}(E_i, F_i) =0$ says that $\Theta_{F_i}([E_i^\vee])$ does not vanish. Furthermore, by composing maps from \eqref{eq:evfi} (use stability) we see that $\hom(E_i, F_j) \neq 0$ for $i \neq j$.  This shows that $\Theta_{F_i}([E_j^\vee])$ does vanish. Hence the ``matrix'' $\Theta_{F_i}([E_j^\vee])$ is diagonal with non-zero entries on the diagonal, and we conclude that the sections $\Theta_{F_i}$ are linearly independent. As we observed above, sections $\Theta_{F}$ are in the image of $\operatorname{SD}_{e,f}$ for any $F$. Hence, if the length of the Quot scheme is equal to the dimension of $H^0\big(M_S(e^{\vee}),{\mathcal O}(\Theta_f)\big)$, we can conclude that $\operatorname{SD}_{e,f}$ is surjective, and if the length of the Quot scheme is equal to the dimension of $H^0\big(M_S(f),{\mathcal O}(\Theta_{e^{\vee}})\big)^*$, we can conclude that $\operatorname{SD}_{e,f}$ is injective.

\section{Hilbert Schemes and Strange Duality} \label{sec:strange-hilbert}
Unfortunately, the dimensions of both sides of \eqref{eq:sd} are unknown in general. One special case where we can compute a dimension is when $f = (1,0,-n)$, the Chern character of the ideal sheaf ${\mathcal I}_Z \subset {\mathcal O}_S$
of a zero-dimensional length $n$ subscheme $Z \subset S$. In this case, we may identify
$$M_S(f) = S^{[n]}$$
with the Hilbert scheme. One can use the algorithm described in \cite{ellingsrud_cobordism_2001} to compute a finite number of terms of the series of Theorem \ref{thm:chi-series} and obtain the Euler characteristics.  Of course, to do this we need to be able to write $\Theta_{e^\vee}$ in the form $\det F^{[n]}$ for some $F$, which we do in Section \ref{sec:theta-hilbert}.

Using the finite Quot scheme method in this context means that we are counting sequences \eqref{eq:evf} where the quotient is an ideal sheaf of $n$ points.  Such quotients may be viewed as sections of the dual bundle $V^*$ that vanish at $n$ points. In \cite{bertram_potiers_2016}, we found an ``expected" length of $\Quot\big(V,(1,0,-n)\big)$ 
by interpreting such sections as multiple points of a map from an auxiliary variety (obtained from $V^*$) to a projective space.  The number of multiple points can be computed for $n \le 7$ using methods of Rim\`{a}nyi and Marangell \cite{marangell_general_2010} and matches the Euler characteristic of $\Theta_{e^\vee}$.

In Section \ref{sec:length-taut}, we explain a different way to view the length of the finite Quot scheme which is a key idea of this paper.

\subsection{Tautological Sheaves} \label{sec:taut-sheaves}
To a vector bundle $F$ on $S$, one can associate a series of ``tautological" vector bundles on the Hilbert schemes $S^{[n]}$, as follows. Consider the standard diagram
\[
\xymatrix{
	\mathcal Z_n \ar@{^(->}[r]& S^{[n]} \ar[r]^(.6){q}  \ar[d]^p \times S & S \\
	    & S^{[n]}
	}
\]
where $\mathcal Z_n$ is the universal subscheme. Then
\begin{equation}
F^{[n]} := p_*(\OO_{\mathcal Z_n} \otimes q^* F). \label{eq:tautsheaf}
\end{equation}
If $Z$ is a subscheme of $n$ isolated points on $S$, then one can identify the fiber of $F^{[n]}$ at the point of $S^{[n]}$ corresponding to $Z$ with the sum of the fibers of $F$ at the points of $Z$.
If $F$ has rank $s$, then $F^{[n]}$ has rank $ns$. This construction can be extended to $K_0(S)$ by applying it to locally free resolutions.

\subsection{Theta Bundles on Hilbert Schemes} \label{sec:theta-hilbert}
In order to make use of Theorem \ref{thm:chi-series}, we need to write $\Theta_{e^\vee}$ in the form $\det F^{[n]}$ for some $F$.

The class of the determinant line bundle induced by a sheaf $\hat E$ of class $e^\vee$ is:  
\[
\Theta_{\hat E}=-c_1(Rp_*(q^*{\hat E} \otimes I_{\mathcal Z_n})).
\]
Here $I_{\mathcal Z_n}$ is the universal ideal sheaf on $S^{[n]} \times S$. 

Starting with the exact sequence 
\[
0 \rightarrow I_{\mathcal Z_n} \rightarrow \OO_{S \times S^{[n]}} \rightarrow \OO_{\mathcal Z_n} \rightarrow 0
\]
on $S \times S^{[n]}$, one then tensors by $q^*\hat E$, and then applies $Rp_*$, obtaining an exact triangle
\[
	Rp_*( q^*\hat E \otimes I_{\mathcal Z_n}  ) \rightarrow H^0(S, \hat E) \otimes \OO_{S^{[n]}} \rightarrow \hat E^{[n]} \rightarrow \cdots
\]
The line bundle $\OO(\Theta_{e^\vee})$ is the dual of the determinant of the first term in this triangle. Since the middle term is a trivial bundle, we obtain: 
\[
\OO(\Theta_{e^\vee}) = \det \hat E^{[n]}. 
\]

\subsection{Lengths of Quot schemes as Chern classes of tautological bundles} \label{sec:length-taut}

Fix $n\ge 1$ and let $e_n = (r, -L, \ch_2(e_n))$. The second Chern charater $\ch_2(e_n)$ is determined by the condition $\chi(e_n, (1,0,-n)) = 0$. Let $v_n= e_n + (1,0-n)$. We have $\det v^\vee_n = \det e^\vee_n = L$ (by a slight abuse we use $L$ for both the first chern class and the associated line bundle), and calculations with the Hirzebruch-Riemann-Roch Theorem yield:
\begin{align}
\chi(v^\vee_n) =n(r+1)-2n+1 \label{eq:chiV*}\\
c_2(v^\vee_n) = \chi(L)-(n-1)(r-1) \label{eq:c2V*}
\end{align}

Now, letting $V_n$ be a vector bundle with chern character $v_n$, we start with the map on $S^{[n]} \times S$
\[
 \OO_{S^{[n]} \times S} \rightarrow \OO_{Z_n},
\]
tensor by $q^*V^*_n$, and push forward by $p$ to obtain a map
\begin{equation}
H^0(S,V^*_n)\otimes \OO_{S^{[n]}} \rightarrow (V^*_n)^{[n]}. \label{eq:V*sec}
\end{equation}
Consider a section $\sigma$ of $V^*_n$, which corresponds via \eqref{eq:V*sec} to a section $\sigma^{[n]}$ of $(V^*_n)^{[n]}$.
The vanishing of $\sigma^{[n]}$ at  a point $[Z] \in S^{[n]}$ corresponds to the vanishing of $\sigma$ along $Z \subset S$. This in turn corresponds to a map $\sigma^\vee:V_n \rightarrow I_Z$, which we are trying to count. Hence we obtain an expected  length of  $\Quot(V_n,(1,0,-n))$ as the number of sections $\sigma^{[n]}$ vanishing at a point, or, equivalently, the number of points where \eqref{eq:V*sec} drops rank.

Since $V^*_n$ has rank $r+1$, $(V^*_n)^{[n]}$ has rank $n(r+1)$. Hence $c_{2n}((V^*_n)^{[n]})$ is expected to count the number of points where $n(r+1) -2n+1$ general sections drop rank. This number is equal to $\chi(V^*_n)$, and we expect it to also be equal to  $h^0(V^*_n)$. Hence the number of points where \eqref{eq:V*sec} drops rank is counted by $c_{2n}\left((V^*_n)^{[n]}\right)$.

From the preceding discussion, and encouraged by the numerical success of the finite Quot scheme method in \cite{bertram_potiers_2016}, we obtain (for $\hat E_n$ with chern character $e_n^\vee$):
\begin{conj} \label{conj:c2n=chi}
	\[
	c_{2n}\left((V^*_n)^{[n]}\right) = \chi(S^{[n]}, \det \hat E^{[n]}_n)
	\]
\end{conj}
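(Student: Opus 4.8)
A natural approach to Conjecture~\ref{conj:c2n=chi} is to exhibit \emph{both} sides as the length of the finite Quot scheme $\Quot\big(V_n,(1,0,-n)\big)$ for a sufficiently general locally free sheaf $V_n$ with $\ch(V_n)=v_n$, exploiting the geometry that produced the conjecture. Concretely, one would prove, for generic $V_n$: \textbf{(a)} that $\Quot\big(V_n,(1,0,-n)\big)$ is finite and reduced, that every quotient \eqref{eq:evfi} satisfies the hypotheses of Section~\ref{sec:finite-quot} (the $E_i,F_i$ Gieseker-semistable, $\ext^j(E_i,F_i)=0$ for all $j$, and $E_i$ locally free), and that $h^1(V^*_n)=h^2(V^*_n)=0$ on $S$; \textbf{(b)} that the length of this scheme equals $c_{2n}\big((V^*_n)^{[n]}\big)$; and \textbf{(c)} that this length also equals $\chi\big(S^{[n]},\det\hat E^{[n]}_n\big)$. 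Statements (b) and (c) together give the conjecture. Both sides of the asserted identity are universal polynomials in $n$ and the Chern numbers of $S$ (Theorems~\ref{thm:chi-series} and~\ref{thm:c2n-series}), so the geometric argument would be run over del Pezzo surfaces --- where $-K_S$ ample makes the strange duality picture of Section~\ref{sec:finite-quot} available --- and these are exactly the cases that motivate, and provide evidence for, the series form in Conjecture~\ref{conj:series}.

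For \textbf{(a)}, the expected dimension of $\Quot\big(V_n,(1,0,-n)\big)$ is $\chi(e_n,(1,0,-n))=0$, so the work is a genericity argument: I would take $V_n$ generic in its moduli space, or build it directly as a general extension as in \eqref{eq:evf}, and verify that the relevant $\ext$-groups vanish, that the quotients are semistable, and that the cohomology of $V^*_n$ concentrates in degree $0$. The semistability of the $E_i$ and $F_i$ is the delicate point, and is where the del Pezzo hypothesis and Bogomolov-type inequalities enter; finiteness and reducedness of the Quot scheme should then follow from standard dimension estimates. For \textbf{(b)}, the mechanism is precisely the discussion of Section~\ref{sec:length-taut}: a section $\sigma^{[n]}$ of the rank-$2n$ bundle $(V^*_n)^{[n]}$ obtained via \eqref{eq:V*sec} from $\sigma\in H^0(S,V^*_n)$ vanishes at $[Z]$ precisely when $\sigma$ vanishes on $Z$, i.e.\ exactly at the Quot scheme points; one then must show that for a general choice of sections (equivalently, that \eqref{eq:V*sec} is a sufficiently general bundle map) this locus is reduced of the expected dimension, so that the top Chern class $c_{2n}\big((V^*_n)^{[n]}\big)$ counts the length with multiplicity one. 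The multiple-point computations of \cite{bertram_potiers_2016} via \cite{marangell_general_2010} give an independent check of this number for $n\le 7$ and should be shown to agree with this Chern-class interpretation.

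The main obstacle is \textbf{(c)}, which is in essence Le Potier's strange duality for the pair $\big(e_n,(1,0,-n)\big)$ together with a cohomology vanishing. The finite Quot scheme argument of Section~\ref{sec:finite-quot} already produces linearly independent theta sections $\Theta_{\hat E_i}\in H^0\big(S^{[n]},\det\hat E^{[n]}_n\big)$ --- the ``matrix'' $\Theta_{\hat E_j}([F_i])$ being diagonal with nonzero entries --- hence
\[
\operatorname{length}\Quot\big(V_n,(1,0,-n)\big)\ \le\ h^0\big(S^{[n]},\det\hat E^{[n]}_n\big).
\]
Converting this into the equality $c_{2n}\big((V^*_n)^{[n]}\big)=\chi\big(S^{[n]},\det\hat E^{[n]}_n\big)$ requires two further inputs: a vanishing theorem $h^i\big(S^{[n]},\det\hat E^{[n]}_n\big)=0$ for $i>0$, so that $h^0=\chi$ and the right-hand side of the conjecture appears; and the reverse inequality $h^0\big(S^{[n]},\det\hat E^{[n]}_n\big)\le\operatorname{length}\Quot\big(V_n,(1,0,-n)\big)$, i.e.\ an upper bound on the global sections of the determinant bundle in terms of the Quot count. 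The vanishing I would try to derive from the positivity of $\det\hat E^{[n]}_n$ on $S^{[n]}$ via a Kawamata--Viehweg-type argument; the upper bound on $h^0$, however, is exactly the part of strange duality --- and of the unknown dimension theory of the moduli space $M_S(e_n^\vee)$ appearing on the other side of $\operatorname{SD}_{e_n,(1,0,-n)}$ --- that is currently out of reach, and it is the real bottleneck.

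An alternative that sidesteps strange duality is to prove the universal series identities \eqref{C1}--\eqref{C4} of Conjecture~\ref{conj:series} directly, since Conjecture~\ref{conj:c2n=chi}, read as a statement for all $n$ and all surfaces $S$, is equivalent to those identities after the substitution $w=z\phi_s(w)$. This is a purely combinatorial task: compute $V_s,W_s,X_s$ (and ideally $Y_s,Z_s$) from Lehn's $\q$-operator calculus \cite{lehn_chern_1999}, match them against the closed forms for $f_r,g_r$ and the defining recursions of $A_r,B_r$ in \cite{ellingsrud_cobordism_2001}, and use Lagrange--B\"urmann inversion to handle the change of variables. The predictions of Marian, Oprea, and Pandharipande for $V_s,W_s,X_s$ in \cite{marian_combinatorics_2017} suggest that this route is within reach at least for \eqref{C1} and \eqref{C2}.
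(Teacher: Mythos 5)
This statement is a \emph{conjecture} in the paper, not a theorem: the paper never proves it. What the paper actually provides is (i) the heuristic derivation of Section \ref{sec:length-taut}, where the length of $\Quot\big(V_n,(1,0,-n)\big)$ is identified with an \emph{expected} degeneracy-locus count giving $c_{2n}\big((V^*_n)^{[n]}\big)$, (ii) Theorem \ref{thm:conj-equiv}, which shows the conjecture is equivalent to the universal-series identities of Conjecture \ref{conj:series}, and (iii) numerical verification to order six. Your proposal faithfully reconstructs this motivating picture and correctly isolates where a proof would have to go, but it does not close any of the gaps: steps (a)--(c) are a research program, and you yourself concede that (c) --- the upper bound $h^0\big(S^{[n]},\det\hat E^{[n]}_n\big)\le\operatorname{length}\Quot$ together with higher-cohomology vanishing --- is exactly Le Potier's strange duality plus a vanishing theorem, both open. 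The ``alternative'' route via Conjecture \ref{conj:series} is likewise not an escape: by Theorem \ref{thm:conj-equiv} the two conjectures are equivalent, so proving \eqref{C1}--\eqref{C4} directly is just as open (the paper only records partial evidence in small ranks and the predictions of \cite{marian_combinatorics_2017}). So the proposal should be read as a correct account of why one \emph{believes} the statement, not as a proof of it.

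One concrete error in step (b): $(V^*_n)^{[n]}$ has rank $n(r+1)$, not $2n$, so for $r>1$ the relevant locus is not the zero locus of a single section $\sigma^{[n]}$ (which would have expected codimension $n(r+1)>2n$ and hence be empty for generic data). The correct mechanism, as in Section \ref{sec:length-taut}, is that the evaluation map \eqref{eq:V*sec} from the trivial bundle of rank $h^0(V^*_n)=n(r+1)-2n+1$ drops rank exactly at the Quot-scheme points, and the class of this degeneracy locus is $c_{2n}\big((V^*_n)^{[n]}\big)$ by Thom--Porteous; even granting transversality, this computes the length only if the locus is reduced of dimension zero, which is part of what remains to be established.
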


\section{The main Conjecture} \label{sec:the-conj}

From Conjecture \ref{conj:c2n=chi}, one can derive the relationship between the universal series of Theorems \ref{thm:chi-series} and \ref{thm:c2n-series} stated in Conjecture \ref{conj:series}.
\begin{thm} \label{thm:conj-equiv}
Conjecture \ref{conj:c2n=chi} is equivalent to Conjecture \ref{conj:series}.
\end{thm}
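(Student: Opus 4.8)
\emph{The plan} is to rewrite both sides of Conjecture~\ref{conj:c2n=chi} as coefficients of a single generating function, push the change of variables $w=z\phi_s(w)$ through via Lagrange inversion, and then read off \eqref{C1}--\eqref{C4} by comparing exponents; the converse then comes from reversing the argument. For the Euler characteristic side, observe that $\hat E_n$ has rank $r$, first Chern class $L$, and $\chi(\det\hat E_n)=\chi(L)$, none of which depend on $n$. Hence Theorem~\ref{thm:chi-series} identifies $\chi(\det\hat E_n^{[n]})$ with the coefficient of $z^n$ in the fixed (i.e.\ $n$-independent) series $g_r(z)^{\chi(L)}f_r(z)^{\frac12\chi(\OO_S)}A_r(z)^{L.K_S-\frac12K_S^2}B_r(z)^{K_S^2}$; and since $S$ is del Pezzo, $\chi(\OO_S)=1$.

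For the Chern class side, $V_n^*$ has rank $s=r+1$, first Chern class $L$, $\chi(\det V_n^*)=\chi(L)$, and, by \eqref{eq:c2V*}, $c_2(V_n^*)=\bigl(\chi(L)+r-1\bigr)-n(r-1)$. Substituting into Theorem~\ref{thm:c2n-series} and peeling the $n$-dependence out of the leading exponent via $V_s(w)^{-n(r-1)}=\bigl(V_s(w)^{2-s}\bigr)^n=\phi_s(w)^n$ (here $s=r+1$) yields
\[
c_{2n}\bigl((V_n^*)^{[n]}\bigr)=[w^n]\bigl(\phi_s(w)^n\,H(w)\bigr),\qquad H(w)=V_s(w)^{\chi(L)+r-1}W_s(w)^{\chi(L)}X_s(w)^{\frac12}Y_s(w)^{L.K_S-\frac12K_S^2}Z_s(w)^{K_S^2}.
\]
Now introduce $w=z\phi_s(w)$ as in Conjecture~\ref{conj:series}. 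The diagonal form of the Lagrange inversion formula gives $\sum_{n\ge0}[w^n]\bigl(\phi_s(w)^nH(w)\bigr)z^n=H(w)/\bigl(1-z\phi_s'(w)\bigr)$, and differentiating $w=z\phi_s(w)$ gives $1-z\phi_s'(w)=\phi_s(w)\,dz/dw$. Thus Conjecture~\ref{conj:c2n=chi} is equivalent to the single identity of power series in $z$,
\[
g_r(z)^{\chi(L)}f_r(z)^{\frac12}A_r(z)^{L.K_S-\frac12K_S^2}B_r(z)^{K_S^2}=\frac{V_s(w)^{\chi(L)+r-1}W_s(w)^{\chi(L)}X_s(w)^{\frac12}Y_s(w)^{L.K_S-\frac12K_S^2}Z_s(w)^{K_S^2}}{\phi_s(w)\,dz/dw},
\]
required to hold for all del Pezzo surfaces $S$ and all $L\in\Pic(S)$.

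Taking logarithms, both sides of this identity are affine-linear in the three quantities $\chi(L)$, $L.K_S-\frac12K_S^2$, $K_S^2$. As $S$ ranges over blow-ups of $\PP^2$ (keeping $\chi(\OO_S)=1$ while realizing infinitely many values of $K_S^2$) and $L$ over $\Pic(S)$, these sweep out a Zariski-dense set, so we may treat them as independent formal variables. The coefficient of $\chi(L)$ then gives $g_r(z)=V_s(w)W_s(w)$, which is \eqref{C1}; the coefficient of $L.K_S-\frac12K_S^2$ gives $A_r(z)=Y_s(w)$, which is \eqref{C3}; the coefficient of $K_S^2$ gives $B_r(z)=Z_s(w)$, which is \eqref{C4}. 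The part involving none of the three invariants reads $f_r(z)^{1/2}=V_s(w)^{r-1}X_s(w)^{1/2}/\bigl(\phi_s(w)\,dz/dw\bigr)$; squaring, and using $V_s(w)^{r-1}=\phi_s(w)^{-1}$ (which is merely $\phi_s=V_s^{2-s}$ with $s=r+1$), converts this into $f_r(z)=X_s(w)/\bigl(\phi_s(w)^4(dz/dw)^2\bigr)$, which is \eqref{C2}. Conversely, substituting \eqref{C1}--\eqref{C4} into the displayed power series identity verifies it, and comparing coefficients of $z^n$ then gives Conjecture~\ref{conj:c2n=chi}.

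\emph{The main obstacle} is the Lagrange inversion step together with its bookkeeping: one must recognize $w=z\phi_s(w)$ as the right substitution, compute the Jacobian factor $1-z\phi_s'(w)=\phi_s(w)\,dz/dw$ it introduces, and observe that this factor combines with the surplus $V_s(w)^{r-1}$ — which is present only because $\chi(\OO_S)=1$ makes the $n$-free part of $c_2(V_n^*)$ equal to $\chi(L)+(r-1)\chi(\OO_S)=\chi(L)+r-1$ — to reconstitute exactly the correction $\phi_s^4(dz/dw)^2$ of \eqref{C2}. A secondary point is the density argument justifying the passage to formal exponents; it is here that one must use blow-ups (to obtain infinitely many values of $K_S^2$ while holding $\chi(\OO_S)=1$) rather than only the nine del Pezzo degrees.
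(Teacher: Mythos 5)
Your proof is correct and takes essentially the same route as the paper: plug the invariants of $V_n^*$ into Theorem \ref{thm:c2n-series}, absorb the $n$-dependent part of the exponent of $V_s$ into $\phi_s(w)^n$, apply Lagrange inversion (the paper's Proposition \ref{prop:lagrange}, with your factor $1-z\phi_s'(w)=\phi_s(w)\,dz/dw$ being exactly its denominator), and then separate \eqref{C1}--\eqref{C4} by varying $(S,L)$. One small caveat: del Pezzo surfaces realize only $K_S^2\in\{1,\dots,9\}$ (blow-ups of $\PP^2$ at nine or more points are not del Pezzo, and Conjecture \ref{conj:c2n=chi} is formulated only for del Pezzos), but this does not matter since the separation argument needs only that the realized triples $\bigl(\chi(L),\,L.K_S-\tfrac12 K_S^2,\,K_S^2\bigr)$ affinely span a three-dimensional space, which they already do.
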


Before embarking on the proof, we state a result that follows from Lagrange Reversion or the Lagrange-B\"{u}rmann formula. Recall that for a power series $f$, $[z^n]f(z)$ means ``the coefficients of $z^n$ in $f(z)$".
\begin{prop} \label{prop:lagrange}
	Let $\psi$ and $\phi$ be power series with constant term equal to 1.
	Suppose $f(z) = \sum_n \left([x^n]\;\psi(x)\phi(x)^n\right) z^n$. Let $w$ be defined implicitly by $w=z\phi(w)$. We may also view $z$ as a function of $w$. 
	Then \[f(z) = \frac{\psi(w)}{\phi(w) \frac{dz}{dw}}.\]
\end{prop}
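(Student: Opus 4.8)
The plan is to treat Proposition~\ref{prop:lagrange} as a restatement of the ``diagonal'' form of Lagrange inversion, so that the genuine content is a known formula and the only new work is a one-line change-of-variables identity. First I would record that identity: differentiating the defining relation $z\,\phi(w)=w$ with respect to $w$ gives $\phi(w)\,\frac{dz}{dw}+z\,\phi'(w)=1$, that is,
\[
\phi(w)\,\frac{dz}{dw}=1-z\,\phi'(w).
\]
Since $\phi(0)=1$, the series $\frac{dz}{dw}$ has constant term $1$, so $w\mapsto z$ is an invertible change of variables and the right-hand side of the Proposition is a well-defined power series in $w$, hence in $z$. Using the displayed identity, the claimed formula $f(z)=\psi(w)\big/\bigl(\phi(w)\,\tfrac{dz}{dw}\bigr)$ is equivalent to
\[
\sum_{n\ge 0}\Bigl([x^n]\,\psi(x)\,\phi(x)^n\Bigr)z^n \;=\; \frac{\psi(w)}{1-z\,\phi'(w)}.
\]

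The second step is to prove this identity. It is exactly the Lagrange--B\"{u}rmann (``diagonal'') form of Lagrange inversion, valid for arbitrary $\psi,\phi\in\mathbb C[[x]]$ with $\phi(0)\ne 0$, so one acceptable route is simply to cite it. For a self-contained argument I would extract the $z^n$ coefficient of the right-hand side as a formal residue and substitute $z=w/\phi(w)$ (the inverse of $w=w(z)$, an invertible substitution because $\phi(0)\ne 0$):
\begin{align*}
[z^n]\,\frac{\psi(w(z))}{1-z\,\phi'(w(z))}
&=\Res_{z=0}\frac{\psi(w(z))\,dz}{z^{\,n+1}\bigl(1-z\,\phi'(w(z))\bigr)}\\
&=\Res_{w=0}\frac{\psi(w)\,\phi(w)^{\,n+1}}{w^{\,n+1}}\cdot\frac{dw}{\phi(w)}
=[x^n]\,\psi(x)\,\phi(x)^n,
\end{align*}
where the middle equality uses $dz=\frac{\phi(w)-w\phi'(w)}{\phi(w)^2}\,dw$ together with $1-z\,\phi'(w)=\frac{\phi(w)-w\phi'(w)}{\phi(w)}$ (so that $\frac{dz}{1-z\,\phi'(w)}=\frac{dw}{\phi(w)}$, the factor $\phi(w)-w\phi'(w)$ being invertible since its constant term is $1$) and $z^{n+1}=w^{n+1}/\phi(w)^{n+1}$, and the fact that a formal residue is invariant under an invertible formal substitution (an exact formal differential has zero residue).

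I do not expect a genuine obstacle here; the points that need care are bookkeeping. One must make sure the geometric-series manipulation implicit in the first equality above --- or, equivalently, the interchange of $\sum_n$ with the residue if one instead starts from $\Res_{x=0}\frac{\psi(x)}{x-z\phi(x)}\,dx$ --- is carried out in the ring in which each coefficient of $z^n$ is a bona fide Laurent series in the integration variable, so that the interchange is literal; and one must check that $z=w/\phi(w)$ really is an invertible power-series substitution, so that the residue change-of-variables formula applies. Granting those routine verifications, the Proposition follows immediately from the identity $\phi(w)\,\frac{dz}{dw}=1-z\,\phi'(w)$ established in the first step.
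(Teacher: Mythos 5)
Your proof is correct: the reduction $\phi(w)\,\frac{dz}{dw}=1-z\,\phi'(w)$ correctly identifies the statement with the standard ``diagonal'' form of Lagrange--B\"urmann, and the residue change-of-variables computation is a valid, standard derivation of that form. The paper itself offers no proof, merely attributing the result to Lagrange reversion / Lagrange--B\"urmann, so your argument supplies exactly the intended justification (plus a self-contained proof of the cited formula).
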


\begin{proof}[Proof of Theorem \ref{thm:conj-equiv}]
Let $S$ be a del Pezzo surface with canonical class $K$. Using the notation of Section \ref{sec:length-taut} and setting $s = r+1$, we obtain from  Theorem \ref{thm:c2n-series} and equation \eqref{eq:c2V*} that 
\begin{align*}
  c_{2n}\left( (V^*_n)^{[n]}\right) &=  [x^n]\;V_s(x)^{\chi(L) - (n-1)(r-1)} \cdot W_s(x)^{\chi(L)}\cdot X_s(x)^{\frac12} \cdot Y_s(x)^{K.L - \frac{K^2}{2}} \cdot Z_s(x)^{K^2}   \\
 &= [x^n]\; (V_s(x)^{2-s})^{n-1}\cdot(W_s(x)V_s(x))^{\chi(L)}\cdot X_s(x)^{\frac12} \cdot Y_s(x)^{K.L - \frac{K^2}{2}} \cdot Z_s(x)^{K^2} 
\end{align*}
We now apply Proposition \ref{prop:lagrange} with $\phi(x) = V_s(x)^{2-s}$ and 
\[
\psi(x) = (W_s(x)V_s(x))^{\chi(L)}\cdot X_s(x)^{\frac12} \cdot Y_s(x)^{K.L - \frac{K^2}{2}} \cdot Z_s(x)^{K^2} \cdot \phi(x)^{-1}.
\]
The formula says that if $w = z\phi(w)$, we have
\begin{align}
  \sum_{n=0}^{\infty} & c_{2n} \left((V^*_n)^{[n]}\right)z^n = \frac{\psi(w)}{\phi(w) \frac{dz}{dw}} \\
  &= (W_s(w)V_s(w))^{\chi(L)}\cdot \left(\frac{X_s(w)}{\phi(w)^4\left(\frac{dz}{dw}\right)^2}\right)^{\frac12} \cdot Y_s(w)^{K.L - \frac{K^2}{2}} \cdot Z_s(w)^{K^2}. \label{eq:proof1}
\end{align}

Next,  since the determinant of $e_n^\vee$ is $L$ (independently of $n$) Theorem \ref{thm:chi-series} gives us:
\begin{equation}
 \sum_{n=1}^\infty \chi(\det \hat E_n^{[n]})z^n = g_r(z)^{\chi(L)} \cdot f_r(z)^{\frac{1}{2}} \cdot A_r(z)^{K.L-\frac{1}{2}K^2}\cdot B_r(z)^{K^2} \label{eq:proof2}
\end{equation}

Conjecture \ref{conj:c2n=chi} says that \eqref{eq:proof1} and \eqref{eq:proof2} are equal.
Since there are plenty of choices of $L$ and $S$ for which Conjecture \ref{conj:c2n=chi} is expected to hold, the equivalence follows.
\end{proof}

\begin{remark}
	We have verified Conjecture \ref{conj:series} computationally up to order 6 using Sage \cite{sage}.  The series $A_r(z)$ and $B_r(z)$ can be computed by the localization techniques in \cite{ellingsrud_homology_1987} and \cite{ellingsrud_botts_1996}, as suggested in \cite{ellingsrud_cobordism_2001}. We used up to order 6 in the present work (although our code can compute more, see \cite{johnson_two_2016}). We reproduce just the first few terms here:
\begin{align*}
A_r(z) &= 1 + \left(-\tfrac{1}{6} r^{3} + \tfrac{1}{6} r\right)z^{2} +
\left(\tfrac{17}{40} r^{5} - \tfrac{5}{8} r^{3} + \tfrac{1}{5}
r\right)z^{3} + \cdots \\
B_r(z) &= 1 + \left(-\tfrac{1}{24} r^{4} + \tfrac{1}{24} r^{2}\right)z^{2} +
\left(\tfrac{97}{720} r^{6} - \tfrac{31}{144} r^{4} + \tfrac{29}{360}
r^{2}\right)z^{3} + \cdots
\end{align*}

The $c_{2n}$ series, to our knowledge, have not been computed in general before. We computed them up to order 6, implementing an algorithm  based on Lehn's paper \cite{lehn_chern_1999}. We reproduce just the first few terms here:
\begin{align*}
V_s(w) &= 1 + w + \left(-\tfrac{1}{2} s^{2} + \tfrac{3}{2} s - 1\right)w^{2} + \left(\tfrac{1}{2} s^{4} - \tfrac{17}{6} s^{3} + 6 s^{2} - \tfrac{17}{3} s + 2\right)w^{3} + \cdots
\\
W_s(w) &= 1 + \left(-\tfrac{1}{2} s^{2} + \tfrac{3}{2} s - 1\right)w^{2} + \left(s^{4} - \tfrac{17}{3} s^{3} + 12 s^{2} - \tfrac{34}{3} s + 4\right)w^{3} + \cdots\\
X_s(w) &= 1 + \left(\tfrac{1}{2} s^{4} - 3 s^{3} + \tfrac{13}{2} s^{2} - 6 s + 2\right)w^{2} + \\
 &\left(-\tfrac{4}{3} s^{6} + 11 s^{5} - \tfrac{112}{3} s^{4} + 67 s^{3} - \tfrac{202}{3} s^{2} + 36 s - 8\right)w^{3} + \cdots\\
Y_s(w) &= 1 + \left(-\tfrac{1}{6} s^{3} + \tfrac{1}{2} s^{2} - \tfrac{1}{3} s\right)w^{2} + \left(\tfrac{17}{40} s^{5} - \tfrac{59}{24} s^{4} + \tfrac{127}{24} s^{3} - \tfrac{121}{24} s^{2} + \tfrac{107}{60} s\right)w^{3} + \cdots \\
Z_s(w) &= 1 + \left(-\tfrac{1}{24} s^{4} + \tfrac{1}{6} s^{3} - \tfrac{5}{24} s^{2} + \tfrac{1}{12} s\right)w^{2} + \\
&\left(\tfrac{97}{720} s^{6} - \tfrac{107}{120} s^{5} + \tfrac{83}{36} s^{4} - \tfrac{35}{12} s^{3} + \tfrac{1303}{720} s^{2} - \tfrac{53}{120} s\right)w^{3} + \cdots
\end{align*}
\end{remark}
\begin{remark} \label{rem:w}
	The first few terms of the expansion of $w$ in terms of $z$ are 
	\begin{align*}
w(z) &= z + \left(- r + 1\right)z^{2} + \left(\tfrac{1}{2} r^{3} + \tfrac{1}{2} r^{2} - 2 r + 1\right)z^{3} + \\
&\left(-\tfrac{1}{2} r^{5} - \tfrac{2}{3} r^{4} + \tfrac{3}{2} r^{3} + \tfrac{5}{3} r^{2} - 3 r + 1\right)z^{4} + \\ &\left(\tfrac{2}{3} r^{7} + \tfrac{23}{24} r^{6} - \tfrac{11}{6} r^{5} - \tfrac{73}{24} r^{4} + \tfrac{8}{3} r^{3} + \tfrac{43}{12} r^{2} - 4 r + 1\right)z^{5} + \cdots
	\end{align*}
\end{remark}
\begin{remark}
	Although Conjecture \ref{conj:c2n=chi} is expected only for del Pezzo surfaces, Conjecture \ref{conj:series} is expected to hold for all surfaces. This is not surprising, since, for example, knowledge of all the numbers for $\PP^2$ and $\PP^1 \times \PP^1$ could determine the series in Theorem \ref{thm:chi-series} and Theorem \ref{thm:c2n-series}.
\end{remark}
\begin{remark}
	In \cite{ellingsrud_cobordism_2001}, the authors note that Serre duality gives the symmetries $B_{-r}=B_r$, $f_r = f_{-r}$, $g_r=g_{-r}$, and $A_r = 1/A_{-r}$. These translate via Conjecture \ref{conj:series} into somewhat more mysterious symmetries such as $Z_s(w)=Z_{2-s}(v)$ and $Y_s(w)=1/Y_{2-s}(v)$, where $v$ and $w$ are related by $vV_s(w)^{2-s}=wV_{2-s}(v)^s$.
\end{remark}

\section{Small ranks} \label{sec:small-rank}
We analyze Conjecture \ref{conj:series} for some small values of $r$ and $s$.

\subsection{Case: $s=2,r=1$} \label{sec:s=2}
Let $F$ be a sufficiently positive a rank 2 bundle. Then $F^{[n]}$ is a rank $2n$ bundle, and $c_{2n}$ computes the number of points of vanishing of a general section. The section $s^{[n]} \in H^0(F^{[n]},S^{[n]})$ vanishes at points $[Z]$ such that $Z$ is contained in the vanishing of $s\in H^0(F,S)$. Hence we conclude that
\[
 c_{2n}(F^{[n]}) = \binom{c_2(F)}{n}.
\]
Since there are plenty of such rank $2$ bundles, it follows then that $W_2=X_2=Y_2=Z_2=1$ and $V_2(w) = 1+w$. We know that $A_1=B_1=f_1=1$ and $g_1(z)=1+z$, and $\phi_2 = 1$ so $z=w$. This verifies Conjecture \ref{conj:series} completely in this case.

\subsection{Case: $s=1,r=0$}
When $F$ is a line bundle, $F^{[n]}$ is a bundle of rank $n$. Hence $c_{2n}(F)=0$ (for $n>0$). There are plenty of line bundles and surfaces, so we see that $W_1=X_1=Y_1=Z_1=1$ We also know that the series $A_0,B_0,f_0$ are 1, so \eqref{C3}-\eqref{C4} are verified.

Plugging in $r=0$ into our computation of $w$ (see Remark \ref{rem:w}), one can guess that $w=\frac{z}{1-z}$. Assuming this, we can verify \eqref{C1}-\eqref{C2}:

We know that $g_0(z) = \frac{1}{1-z}$. The equation $w = z\phi_1(w)$ implies that $\phi_1(w) = \frac{1}{1-z}$. Since $V_1(w)=\phi_1(w)$, this confirms \eqref{C1}.

One can check with implicit differentiation that $\frac{dz}{dw} = (1-z)^2$. This confirms \eqref{C2}.

\subsection{Case: $s=0,r=-1$} \label{sec:s=0}
We know that $A_{-1} = B_{-1} = f_{-1} = 1$ and $g_{-1}(z) = 1+z$, so \eqref{C3} and \eqref{C4} predict that $Y_0=Z_0=1$. Letting $F=0$ on various surfaces, we can at least see that $Z_0^2/Y_0 = 1$.

Letting $F=0$ and $S$ be $K$-trivial surface, we see that $W_0^2X_0 = 1$. 
 Again plugging $r=-1$ into our computation for $w$ (see Remark \ref{rem:w}), we can guess that $w= \frac{z}{(1-z)^2}$.

We have one fact and three guesses to determine the three series $V_0$, $W_0$, and $X_0$, so we can at least check that they are consistent. Let's assume that $w=\frac{z}{(1-z)^2}$ and \eqref{C1}, and check that this implies \eqref{C2}.

We get $\phi_0(w) = \frac{1}{(1-z)^2}$, hence $V_0(w) = \frac{1}{1-z}$. Then \eqref{C1} implies that $W_0(w) = (1-z)(1+z)$. Hence $X_0(w)$ must be $\frac{1}{(1-z)^2(1+z)^2}$.  By implicit differentiation, we see that $\frac{dz}{dw} = \frac{(1-z)^3}{1+z}$, and plugging all these in, we see that the right hand side of \eqref{C2} is $1$, as desired.

In particular, we obtain a prediction about skyscraper sheaves on any surface:
\[
\sum_n  \ c_{2n}\left(\mathcal O_p^{[n]}\right)w^n = 1-z
\]

We can get a more explicit formula for this, as we explain now. Let $C_n$ be the $n$th Catalan number, which is given by the formula $C_n = \frac{1}{n+1}\binom{2n}{n}$. Let $C(w) = \sum_{n\ge0} C_n w^n$ be the ordinary generating series. Recall that the Catalan numbers are determined by $C_0 = 1$ and the functional equation $C(w)=1+wC(w)^2$. It follows then that the series for the alternating Catalan numbers $\hat C(w) = \sum_{n\ge0}(-1)^nC_nw^n$ is determined by $C_0=1$ and the equation
\[
\hat C(w) = 1-w\hat C(w)^2
\]
Substituting $w= \frac{z}{(1-z)^2}$, one obtains
\[
\hat C(w) = 1-\frac{z}{(1-z)^2}\hat C(w)^2.
\] 
This equation has the solution $\hat C(w) = 1-z$. Hence we are led to predict that 
\[
c_{2n}\left(\mathcal O_p^{[n]}\right) = (-1)^n C_n
\]
We will prove this later as Theorem \ref{thm:cat}.

\subsection{Case: $s=-1,r=-2$} \label{sec:s=-1}
First, we observe that, making the change of variables $z = u(1+u)^{r^2-1}$, we have
\begin{align*}
g_r(z) = 1+u \\
f_r(z) = \frac{(1+u)^{r^2}}{1 + r^2u}
\end{align*}
To see this, one can write 
\begin{align*}
g_r'(z) = \sum_k\left( [z^k] (1+z)^{(1-r^2)(k+1)}\right) z^k \\
f_r(z) = \sum_k\left( [z^k](1+z)^{(1-r^2)(k-1)} \right)z^k
\end{align*}
Then apply Proposition \ref{prop:lagrange} with $\phi(z) = (1+z)^{1-r^2}$ and we obtain the formula for $f_r(z)$ above and $g_r'(z)= \frac{1}{\frac{dz}{du}}$, from which the formula for $g_r(z)$ follows. 

Specializing to $r=-2$ we have $z = u(1+u)^3$, $g_{-2}(z) = 1+u$, and $f_{-2}(z) = \frac{(1+u)^4}{1+4u}$. We can't say anything about $A_{-2}$ or $B_{-2}$ (beyond their computation for small $n$), so we will restrict ourselves to the $K$-trivial case.

In \cite{marian_segre_2015}, Marian, Oprea, and Pandharipande compute the top Segre class of the tautological sheaf associated to a line bundle on a $K$-trivial surface, verifying the $K$-trivial case of a conjecture of Lehn \cite{lehn_chern_1999}. From this we get some information about $s=-1$. 

Let $F = -\mathcal O(H)$, so (on a $K$-trivial surface) we have $\chi(c_1(F)) = \frac12 H^2 + \chi(S)$, $c_2(F) = H^2$, and $c_2(S) = 12\chi(S)$.
It then follows from formulas (1),(3), and (4) in \cite{marian_segre_2015} that, using the substitution $w = \frac12t(1+t)^2$, we have
\begin{align*}
(1+t)^{\frac12} =&  V_{-1}(w) W_{-1}(w)^{\frac12} \\ 
(1+t)^{\frac18}(1+3t)^{-\frac{1}{24}} =&   W_{-1}(w)^{\frac{1}{12}}X_{-1}(w)^{\frac{1}{24}} 
\end{align*}
We thus obtain 
\begin{align*}
(1+t) &= V_{-1}(w)^2W_{-1}(w) \\
\frac{(1+t)^3}{(1+3t)} &= W_{-1}(w)^2 X_{-1}(w)
\end{align*}
 If we assume that \eqref{C1} is true, we get 
\[
g_{-2}(z) = W_{-1}(w)V_{-1}(w)
\]
so we obtain ``closed'' formulas:
\begin{align*}
V_{-1}(w) &= \frac{1+t}{1+u} \\
W_{-1}(w) &= \frac{(1+u)^2}{1+t} \\
X_{-1}(w) &= \frac{(1+t)^5}{(1+3t)(1+u)^4}.
\end{align*}

From \eqref{C1}, let us verify \eqref{C2}. We can do this by writing both sides of \eqref{C2} in terms of $t$.
We have $\phi_{-1}(w) = V_{-1}(w)^3 = \frac{(1+t)^3}{(1+u)^3}$, and from the equations $w = z\phi_{-1}(w)$ and $z=u(1+u)^3$ we can obtain $u = \frac{t}{2(1+t)}$. Thus we can write $\phi_{-1}(w)$,  $X_{-1}(w)$, and $f_{-2}(z)$ in terms of $t$. In order to obtain $\frac{dz}{dw}$, we first write $z = u(1+u)^3 = \frac{(2+3t)^3}{2+2t)^4}$. We know that
$
\frac{dz}{dw} = \frac{dz}{dt} \frac{dt}{dw}
$.
One can obtain $\frac{dt}{dw} = \left[\frac12(1+t)^2 + t(1+t)\right]^{-1}$ by implicit differentiation applied to $w = \frac12t(1+t)^2$. Now one plugs all these in and, preferably with a computer, verifies \eqref{C2}.

\section{Top Chern classes of tautological sheaves associated to skyscraper sheaves}
In this section will prove the following, suggested by our analysis in Section \ref{sec:s=0}.

\begin{thm} \label{thm:cat}
	We have on ${S^{[n]}}$:
	\[
	 c_{2n}\left(\mathcal O_p^{[n]}\right) = (-1)^n C_n
	\]
	for any surface $S$ and any point $p$, where $C_n$ is the $n$th Catalan number.
\end{thm}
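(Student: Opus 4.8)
The plan is to compute the generating series $G(w)=\sum_{n\ge0}c_{2n}\big(\OO_p^{[n]}\big)w^n$ using Lehn's description \cite{lehn_chern_1999} of $\HH:=\bigoplus_n H^*(S^{[n]})$ as a Fock space: Nakajima creation/annihilation operators $\q_i(\alpha)$ (for $i\in\mathbb Z$, $\alpha\in H^*(S)$), vacuum $|0\rangle$, and Lehn's boundary operator $\mathfrak d$. Lehn expresses the Chern character (and hence, after the standard conversion, the total Chern class) of a tautological sheaf $F^{[n]}$ as a single universal operator --- a normally ordered expression built from the $\q_i(-)$ and $\mathfrak d$ --- applied to $|0\rangle$, in which $F$ enters only through $\ch(F)$ and $S$ only through $\ch(T_S)$. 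First I would record this formula for $F$ a line bundle and extend it to $K_0(S)$, which is legitimate because $F\mapsto F^{[n]}$ is exact on locally free sheaves, so it descends to $K_0$ and the resulting class depends only on $\ch(F)$; applying it to a Koszul resolution of $\OO_p$ then amounts to the substitution $\ch(F)=\ch(\OO_p)=[\mathrm{pt}]\in H^4(S)$.

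The second step is to exploit that specialization. Since $[\mathrm{pt}]\cdot\beta=0$ for every $\beta$ of positive degree and $[\mathrm{pt}]^2=0$, almost every term of Lehn's operator dies: what survives lies in the sub-Heisenberg algebra generated by the $\q_i([\mathrm{pt}])$ with $i\ge1$, on which $\mathfrak d$ and the relevant commutators act in the simplest possible way (only the ``leading'' pieces remain, carrying elementary binomial weights), and the products $[\mathrm{pt}]\cdot K_S$ and $[\mathrm{pt}]\cdot\ch_2(T_S)$ vanish, so the a priori $S$-dependence --- a universal polynomial in $K_S^2$ and $\chi(\OO_S)$ --- drops out and the answer is genuinely a constant, as required. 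Concretely this collapses the computation to the ``one-variable'' Fock space, i.e.\ to an explicit finite weighted sum over partitions $\lambda\vdash n$ (indexing the monomials $\prod_i\q_i([\mathrm{pt}])^{m_i}|0\rangle$), each weight a product of binomial coefficients read off from Lehn's structure constants together with a sign $\prod_j(-1)^{\lambda_j-1}$ coming from the Heisenberg relation $[\q_i,\q_{-i}]=(-1)^{i-1}i\,(\cdots)$. Summing over $n$ produces a closed expression for $G(w)$.

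The last step is to identify $G(w)$ with the alternating Catalan series $\hat C(w)=\sum_n(-1)^nC_nw^n$. I see two routes. One is to feed the partition sum into Lagrange inversion (Proposition \ref{prop:lagrange}) and bring $G(w)$ into the form $1-z$ under the change of variables $w=z/(1-z)^2$, which is exactly the identity for $\hat C$ already recorded in Section \ref{sec:s=0}. The other is to extract from the operator recursion --- the nested-Hilbert-scheme ``add the last point'' structure built into the $\q_i$ --- a direct proof that $G=1-wG^2$, and then invoke uniqueness of the power-series solution with constant term $1$. Either way one concludes $c_{2n}(\OO_p^{[n]})=(-1)^nC_n$. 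I expect the main obstacle to be the bookkeeping in step two: correctly tracking all the signs (the $(-1)^{i-1}$ in the Heisenberg relations, the signs in the $\mathfrak d$-commutators, and those in passing from the Chern character to the total Chern class of a rank-$0$ virtual class) and verifying that precisely the point-supported terms with the claimed binomial weights survive --- obtaining $G=1-wG^2$ rather than $G=1+wG^2$ is the whole game. Once the weighted partition sum has been pinned down, the remaining identity is the promised combinatorial heart and follows cleanly from the Catalan functional equation.
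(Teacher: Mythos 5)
Your outline follows the same road as the paper's proof: Lehn's operator formula $\sum_n c(u^{[n]})=\exp(\mathfrak C(u))\one$ (Theorem \ref{thm:lehn}), specialization of the $K$-theory class to $u=\OO_p$ so that the a priori $S$-dependence drops out, reduction to integrals of monomials in Nakajima operators against the vacuum, and identification of the resulting series with $\hat C(w)=\sum_n(-1)^nC_nw^n$ via the Catalan functional equation. But there is a genuine gap exactly at the step you yourself flag as ``bookkeeping'': the surviving weighted sum is not a matter of reading off elementary binomial coefficients from Lehn's structure constants, and pinning it down is the actual content of the proof. Concretely, $\OO_p$ contributes $\q_1(S)+\sum_{\nu\ge0}(-1)^{\nu+1}(\nu+1)\q_1^{(\nu)}(p)$ to $\mathfrak C(u)$, and the derivatives must be expanded by iterating $\q_n'(p)=\tfrac n2\sum_\nu\q_\nu(p)\q_{n-\nu}(p)$. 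After discarding the terms that integrate to zero, what one needs is the coefficient of $\q_1(p)^{m+1}\q_{-1}(p)^{m}$ in $\q_1^{(2m)}(p)$; these coefficients (the $D_n$ of the paper) are exactly what must match the coefficients of $\log C(x)$, and computing them is a nontrivial combinatorial identity --- the paper encodes the iterated derivative as a sum over increasing binary trees and evaluates the relevant weight polynomial using the weighted Cayley formula for labeled trees (Lemmas \ref{lem:dp}--\ref{lem:Dn}). Your proposal supplies no mechanism for this evaluation, and without it neither of your closing routes (Lagrange inversion to $1-z$, or a direct derivation of $G=1-wG^2$, which the paper does not attempt and which is not obviously available) can be launched.

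A second, smaller but substantive, error: the surviving operators do not lie in the subalgebra generated by the $\q_i(p)$ with $i\ge1$. The rank-zero term $c_0(\OO_p)=[S]$ contributes $\q_1(S)$, and the expansion of the derivatives necessarily produces annihilation factors $\q_{-1}(p)$; it is precisely the pairing $[\q_1(S),\q_{-1}(p)]=\int_S S\cdot p$ between these two kinds of factors that makes the integrals nonzero and produces the sign $(-1)^n$ (one $-1$ for each $\q_{-1}(p)$, as in Lemma \ref{lem:biject}). If you keep only creation operators in the point class, every monomial of conformal degree $n$ other than $\q_1(p)^n$ dies and the answer comes out wrong. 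So the ``one-variable Fock space'' picture must be replaced by a careful count of monomials in $\q_1(p)$, $\q_{-1}(p)$ and $\q_1(S)$, which is where the paper's bijection lemma does its work.
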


\subsection{Preliminaries}
We will prove Theorem \ref{thm:cat} using the methods of Lehn \cite{lehn_chern_1999}. We recall some notation and important results from that paper.

Let $\HH_n = H^*(S^{[n]},\mathbb Q)$, and let $\HH = \bigoplus_n \HH_n$. We consider $S^{[0]}$ to be a point, so $\HH_0$ is spanned by a single vector $\one$ called the vacuum vector. $\HH$ has two gradings: the conformal degree given by the decomposition $\bigoplus_n \HH_n$ and the algebraic degree, given by the algebraic codimension of the cohomology class. (The algebraic codimension is half of the cohomological codimension.)

For each $n \in \mathbb Z$ and $\alpha \in H^*(S,\QQ)$ there is an  operator $\q_n(\alpha)$ on $\mathbb H$ of conformal degree $n$ and algebraic degree $n-1+\deg(\alpha)$. When $n$ is positive and $x\in \HH_k$ is represented by a subscheme $X \subset S^{[k]}$ and $\alpha$ is represented by a subscheme $A \subset S$, then $\q_n(\alpha)(x)$ roughly corresponds to the class in $H^*(S^{[n+k]},\QQ)$ formed by taking the locus of subschemes where $k$ of the points correspond to a point from $X$ and the remaining $n$ points lie on $A$ (with multiplicity if this can happen in more than one way). We also have $\q_0(\alpha) = 0$. 
\begin{example} \label{ex:q}
Letting $p$ be the class of a point on $S$ and writing $S$ for the fundamental class, we have that $\q_1(p)^n\one$ is the class of a point on $S^{[n]}$ and $\q_1(S)^n\one$ is $n!$ times the fundamental class of $S^{[n]}$.
\end{example}
We refer the reader to \cite{lehn_chern_1999} for the precise definition and more details. For this paper, we will only need the  properties of the $\q$ operators that we recall here.

\begin{thm}[\cite{nakajima_heisenberg_1997},\cite{grojnowski_instantons_1996}] \label{thm:q-comm}
	\[
	[\q_n(\alpha),\q_m(\beta)] = n \delta_{n+m} \int_S \alpha \beta \operatorname{id}_{\mathbb H}
	\]
\end{thm}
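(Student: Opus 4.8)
The statement is the Heisenberg (Nakajima--Grojnowski) commutation relation, and the plan is to follow the correspondence-and-excess-intersection approach. The first step is to unwind the geometric definition of the operators, since this is what turns the commutator into an intersection-theoretic problem. For $n>0$ the creation operator $\q_n(\alpha)\colon\HH_k\to\HH_{k+n}$ is given by the class of the nested incidence locus
\[
Z^{[k+n,k]} = \bigl\{\,(\xi,x,\eta): \eta\subset\xi,\ \operatorname{supp}(I_\eta/I_\xi)=\{x\}\,\bigr\}\subset S^{[k+n]}\times S\times S^{[k]},
\]
acting by pulling a class back from $S^{[k]}$, cupping with the pullback of $\alpha$ along the support map $\rho\colon Z^{[k+n,k]}\to S$, and pushing forward to $S^{[k+n]}$. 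For $n<0$ one sets $\q_n(\alpha)$ to be (a sign times) the transpose of $\q_{-n}(\alpha)$ with respect to the Poincar\'e pairing, and $\q_0=0$. With this in hand, computing $[\q_n(\alpha),\q_m(\beta)]$ amounts to forming the two composite correspondences over $S^{[k+n+m]}\times S\times S\times S^{[k]}$ (via the fiber product of flags) and identifying their difference.

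Next I would split into cases by the signs of $n$ and $m$. When $n$ and $m$ have the same sign, both composites are supported on the same flag variety of nested subschemes $\eta\subset\zeta\subset\xi$ with the two prescribed one-point supports, and this locus is symmetric under interchanging the two added (or two removed) chunks, including along the sub-locus where the support points collide; hence the two orderings give equal correspondences and the commutator vanishes. When $n$ and $m$ have opposite signs but $n+m\neq 0$, I would show the commutator vanishes by a dimension count: the fiber product defining each composite has dimension strictly smaller than $\dim\bigl(S^{[k+n+m]}\times S^{[k]}\bigr)$ needed to produce a nonzero correspondence class, so both composites already vanish.

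The content is therefore concentrated in the case $m=-n$. Here the two composites $\q_n(\alpha)\q_{-n}(\beta)$ and $\q_{-n}(\beta)\q_n(\alpha)$ agree on the ``generic'' part of their fiber products---the part where the created chunk and the annihilated chunk are disjoint---so these cancel in the commutator, and what survives is an excess contribution along the small diagonal, where the two chunks coincide. A local analysis at a general point of this diagonal forces the two support points $x,y\in S$ to be equal, so the insertion $\rho_1^*\alpha\cdot\rho_2^*\beta$ restricts to $\alpha\beta$ on the diagonal of $S$ and integrates to $\int_S\alpha\beta$; this shows $[\q_n(\alpha),\q_{-n}(\beta)]=c_n\bigl(\int_S\alpha\beta\bigr)\operatorname{id}_\HH$ for some universal constant $c_n$ independent of $S$, $\alpha$, $\beta$.

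The main obstacle is pinning down the constant to be exactly $c_n=n$, rather than merely some nonzero number. This reduces to a self-intersection computation on the diagonal of $Z^{[k+n,k]}$, which by the independence of $S$ may be modeled on the punctual Hilbert scheme of $n$ points supported at the origin of $\mathbb C^2$; the factor $n$ then emerges as the relevant degree/Euler number of that local model. I expect this to be the genuinely delicate step, and if the direct local computation proves unwieldy I would instead fix $c_1=1$ by a transverse-intersection argument and propagate to general $n$ by an induction, commuting the relation against $\q_{\pm 1}$ and using the Jacobi identity to force $c_n=n$. Finally I would remark that the overall sign is convention-dependent---Nakajima's normalization carries a factor $(-1)^{n-1}$ that Lehn's conventions, used here, absorb into the definition of the operators---so that the clean form $n\,\delta_{n+m}\int_S\alpha\beta\,\operatorname{id}_\HH$ results.
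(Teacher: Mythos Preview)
The paper does not prove this statement; it is quoted as a foundational result of Nakajima and Grojnowski and used as a black box. There is thus no ``paper's own proof'' to compare against.

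Your sketch follows the standard correspondence-and-excess-intersection strategy from Nakajima's original work, and the broad architecture (same-sign commutators vanish by symmetry of the nested flag, the $m=-n$ case localizes to the diagonal and produces a universal constant times $\int_S\alpha\beta$) is correct. Two cautions. First, the opposite-sign $n+m\neq 0$ case is not disposed of by saying each composite correspondence individually vanishes for dimension reasons; rather, one shows that the difference of the two composites is supported on a locus of too small dimension, so the commutator (not each term) vanishes. Second, the determination of the constant $c_n$ is historically the hard part: Nakajima's original argument showed $c_n\neq 0$ but did not compute it, and the value $c_n=n$ (in the present sign convention) was established separately, e.g.\ via Ellingsrud--Str\o mme's analysis of the punctual Hilbert scheme or via the vertex-algebra/representation-theoretic arguments. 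Your fallback of fixing $c_1=1$ and propagating by Jacobi does not work as stated, since $[\q_{\pm1}(\gamma),\q_n(\alpha)]=0$ for $n\neq\mp1$ by the very relation you are trying to prove, so commuting against $\q_{\pm1}$ gives no new information; one needs an additional operator (such as Lehn's $\mathfrak d$, or a Virasoro operator) to run an induction of that kind.
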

Here $[\mathfrak f, \mathfrak g] := \mathfrak {f g} - \mathfrak{gf}$ is the commutator and $\delta_i=1$ if $i=0$ and $0$ otherwise.

Let $\mathfrak d$ be the operator on $\HH$ that multiplies $x \in \HH_n$ by $c_1(\OO_S^{[n]})$. We define the derivative of $\q_n(\alpha)$ to be 
\[
\q_n'(\alpha) := [\mathfrak d, \q_n(\alpha)].
\]
This derivative obeys the Leibniz rule. We write $\q_n^{(k)}(\alpha)$ for the $k$th derivative, not to be confused with $\q_n(\alpha)^k$, the $k$-fold composition.

Derivatives may be computed in terms of $\q$'s.  Let $\delta:H^*(S,\QQ) \rightarrow H^*(S,\QQ) \otimes H^*(S, \QQ)$ be the map induced by the diagonal embedding, and write $\delta(\alpha) = \sum_i \beta_i \otimes \gamma_i$.  Define 
\[
\q_n\q_m \delta (\alpha) := \sum_i \q_n(\beta_i)\q_m(\gamma_i). 
\] 
Let $K$ be the canonical class of $S$. Then we have:
\begin{thm}[\cite{lehn_chern_1999}] \label{thm:q-deriv}
	\[
	\q_n'(\alpha) = \frac n2 \sum_\nu \q_\nu \q_{n-\nu} \delta(\alpha) + \binom{n}{2}\q_n(K_S.\alpha)
	\]
\end{thm}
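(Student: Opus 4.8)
The plan is to prove the derivative formula by realizing both sides as operators attached to geometric correspondences on nested Hilbert schemes, and then computing the commutator $[\mathfrak{d}, \q_n(\alpha)]$ as multiplication by an explicit difference of tautological first Chern classes along such a correspondence. First I would recall that $\mathfrak{d}$ is cup product with $c_1(\OO_S^{[m]})$ on $\HH_m$, which up to a factor of $-\tfrac12$ is the class of the boundary divisor (the locus of non-reduced subschemes). The creation operator $\q_n(\alpha)$, for $n\ge 1$, is defined by the incidence correspondence $S^{[\ell,\ell+n]} = \{\,\xi\subseteq\xi' : \ell(\xi'/\xi)=n,\ \operatorname{Supp}(\xi'/\xi)=\{x\}\,\}$ inside $S^{[\ell]}\times S\times S^{[\ell+n]}$, with $\alpha$ inserted at the residual point $x$. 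Because $\mathfrak{d}$ is a cup-product operator, the commutator $[\mathfrak{d},\q_n(\alpha)]$ is the operator defined by the \emph{same} correspondence but weighted by the difference $\operatorname{pr}_{\ell+n}^* d_{\ell+n} - \operatorname{pr}_\ell^* d_\ell$, where $d_m = c_1(\OO_S^{[m]})$.

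The second step is to compute the difference $d_{\ell+n}-d_\ell$ restricted to the correspondence. Here I would use the exact triangle relating $\OO_S^{[\ell+n]}$, $\OO_S^{[\ell]}$ and the tautological contribution of the residual length-$n$ subscheme supported at $x$, and apply Grothendieck--Riemann--Roch. The point is that this difference splits into two kinds of pieces: a leading term that merely records the insertion of the residual subscheme (and is absorbed into the very definition of $\q_n$), plus genuine interaction terms coming from how the $n$ residual points see each other and the ambient surface.

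The third step is to identify the two interaction contributions with the two terms on the right-hand side. The mutual interaction of the colliding points is measured by the class of the diagonal in $S\times S$; writing $\delta(\alpha)=\sum_i \beta_i\otimes\gamma_i$, each splitting of the $n$ residual points into two sub-packets contributes a factor $\q_\nu\q_{n-\nu}\delta(\alpha)$, and after summing over splittings and symmetrizing one obtains $\tfrac{n}{2}\sum_\nu \q_\nu\q_{n-\nu}\delta(\alpha)$. The remaining contribution, arising from the normal/cotangent directions along the diagonal (equivalently the Todd correction in GRR), attaches a factor of $K_S$ at the contact point for each of the $\binom{n}{2}$ pairs, giving $\binom{n}{2}\q_n(K_S.\alpha)$. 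I would pin down the numerical coefficients either by this direct correspondence bookkeeping or, as a check, by testing the formula against the Heisenberg relations of Theorem \ref{thm:q-comm} and against small $n$ (the case $n=1$, where both terms vanish, and $n=2$).

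The main obstacle is that the nested Hilbert scheme $S^{[\ell,\ell+n]}$ is singular and the correspondences are not transverse, so the GRR/excess-intersection computation of $d_{\ell+n}-d_\ell$ must be carried out with care; in particular, cleanly separating the diagonal self-interaction (the $\delta$ term) from the canonical-class contribution (the Todd correction), and verifying that all higher self-interactions collapse to exactly the stated two terms with coefficients $\tfrac{n}{2}$ and $\binom{n}{2}$, is the delicate heart of the argument. Lehn manages this through the vertex-operator formalism, effectively reducing the general $n$ to the one-point geometry of $S^{[\ell,\ell+1]}$ and then propagating via the Leibniz rule, and I would follow that reduction in order to keep the intersection theory under control.
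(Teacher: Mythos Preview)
The paper does not give a proof of this theorem; it is stated with attribution to Lehn \cite{lehn_chern_1999} and used as a black box. There is therefore no proof in the paper for your proposal to be compared against.

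That said, your sketch is a reasonable outline of Lehn's original argument: realizing $\q_n'(\alpha)=[\mathfrak d,\q_n(\alpha)]$ as a correspondence on nested Hilbert schemes weighted by the difference of boundary classes, and then separating the diagonal self-interaction term from the Todd/canonical-class correction. You are also right that the delicate step is controlling the excess intersection on the singular correspondence, and that Lehn's actual route is to reduce to the one-step case $S^{[\ell,\ell+1]}$ and propagate via the vertex-operator formalism rather than to carry out the full GRR computation on $S^{[\ell,\ell+n]}$ directly. If you want to turn this into a self-contained proof, you should follow that reduction explicitly rather than the direct GRR approach you describe first, since the latter would require resolving or virtually smoothing $S^{[\ell,\ell+n]}$, which is exactly what Lehn's formalism avoids.
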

Notice that although the sum is over all $\nu\in \mathbb Z$, the operator is locally finite, that is, for any fixed $k$, only finitely many of the terms have a non-zero action on $\HH_k$.

To compute Chern classes of tautological sheaves, one needs: 
\begin{thm}[\cite{lehn_chern_1999}, Corollary 4.3] \label{thm:lehn}
	For any $u \in K(S)$ let $\mathfrak C(u)$ be the operator
	\[
	\mathfrak C(u) := \sum_{\nu,k \ge 0} \binom{\text{rk}(u)-k}{\nu} \q_1^{(\nu)}(c_k(u)).
	\]
	Then
	\[
	\sum_{n\ge 0} c(u^{[n]}) = \exp(\mathfrak C(u)) \one.
	\]
\end{thm}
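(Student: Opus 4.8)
The plan is to apply Lehn's operator formula (Theorem \ref{thm:lehn}) to $u = \OO_p$ and to read off the top Chern class as a single rational number. First I would record the Chern data of $\OO_p$: it has rank $0$, $c_1(\OO_p)=0$, and $c_2(\OO_p)=-p$ (from $\ch(\OO_p)=p$ and $\ch_2=-c_2$). Substituting into $\mathfrak C(u)=\sum_{\nu,k\ge0}\binom{\mathrm{rk}(u)-k}{\nu}\q_1^{(\nu)}(c_k(u))$, only $k=0$ and $k=2$ survive; using $\binom{0}{\nu}=\delta_{\nu,0}$ and $\binom{-2}{\nu}=(-1)^\nu(\nu+1)$ gives the explicit operator
\[
\mathfrak C(\OO_p) = \q_1(S) + \sum_{\nu\ge0}(-1)^{\nu+1}(\nu+1)\,\q_1^{(\nu)}(p),
\]
where $S$ denotes the fundamental class. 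Since every summand raises conformal degree by exactly $1$, the degree-$n$ part of $\exp(\mathfrak C(\OO_p))\one$ is precisely $\tfrac1{n!}\mathfrak C(\OO_p)^n\one$, whose total Chern class is $c(\OO_p^{[n]})$. Because $H^{4n}(S^{[n]},\QQ)$ is one-dimensional, spanned by the point class $\q_1(p)^n\one$, the class $c_{2n}(\OO_p^{[n]})$ equals a scalar $a_n$ times the point class; equivalently $a_n=\int_{S^{[n]}}\tfrac1{n!}\mathfrak C(\OO_p)^n\one$. The goal becomes showing $a_n=(-1)^nC_n$.

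Next I would extract the structural simplifications that make the operator calculus tractable. The decisive fact is $\int_S p^2=0$: by Theorem \ref{thm:q-comm} all the operators $\q_m(p)$, $m\in\mathbb Z$, commute with one another, and the only nonvanishing pairing involving $p$ is $\int_S p\cdot S=1$, so $[\q_1(S),\q_m(p)]=\delta_{m,-1}$. Moreover, in Theorem \ref{thm:q-deriv} the term $\binom{1}{2}\q_1(K_S.\alpha)$ vanishes for $n=1$, so the derivatives are purely quadratic in the Heisenberg operators: using $\delta(p)=p\otimes p$ one gets $\q_1'(p)=\tfrac12\sum_{\nu}\q_\nu(p)\,\q_{1-\nu}(p)$. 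Iterating the Leibniz rule expresses every $\q_1^{(\nu)}(p)$ as a finite sum of products $\q_{a_1}(p)\cdots\q_{a_j}(p)$ of point-class operators. Thus $\mathfrak C(\OO_p)$ is built from a single ``fundamental-class creation'' operator $\q_1(S)$ together with normally ordered monomials in the commuting point-class operators, and the only way an annihilation operator $\q_{<0}(p)$ can fail to kill the vacuum is by being commuted past a $\q_1(S)$, each such commutation contributing a factor $1$. This is the mechanism that couples the $\q_1(S)$ term to the derivative terms.

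With this in hand I would set up the count as a sum over normal-ordered contractions and package it into the generating series $C(w)=\sum_{n\ge0}a_nw^n$. Carrying out the normal ordering, each application of $\mathfrak C(\OO_p)$ either creates a new point-class factor (the $\nu=0$ term, with coefficient $-1$) or produces a pair through the quadratic derivative terms, one leg of which is an annihilation operator that must be contracted against an earlier $\q_1(S)$. The quadratic nature of the derivative formula should force a quadratic functional equation on the generating series, which I expect to identify with the defining equation of the alternating Catalan series, $\hat C(w)=1-w\hat C(w)^2$; comparison with $a_0=1$ then yields $a_n=(-1)^nC_n$. As an independent check, for $n=1$ all derivative terms vanish for degree reasons, giving $\mathfrak C(\OO_p)\one = S-p$ and $a_1=-1=(-1)^1C_1$, matching $c_2(\OO_p)=-p$ directly.

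The main obstacle is the combinatorial bookkeeping in the last step: unlike the top coefficient alone, the recursion visibly draws on sub-top-degree pieces of $\tfrac1{(n-1)!}\mathfrak C(\OO_p)^{n-1}\one$, since a derivative operator of algebraic degree $2+\nu$ lands in top degree only when applied to a lower piece, so one cannot track $a_n$ in isolation. The work is to organize the contractions between the annihilation legs of the derivative terms and the $\q_1(S)$ factors so that the bookkeeping collapses to a single quadratic relation; establishing that this relation is exactly the Catalan functional equation --- rather than merely computing $a_n$ term by term, which we have already done through order six --- is the heart of the proof.
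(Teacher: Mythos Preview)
First, a mismatch: the stated theorem (\ref{thm:lehn}) is Lehn's result, quoted from \cite{lehn_chern_1999} and not proved in this paper. Your proposal is not a proof of that statement at all; it \emph{applies} it in order to prove Theorem~\ref{thm:cat}, the Catalan-number identity $c_{2n}(\OO_p^{[n]})=(-1)^nC_n$. I will evaluate it as an attempt at Theorem~\ref{thm:cat}.

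Your setup coincides with the paper's: the specialization of $\mathfrak C(u)$ to $u=\OO_p$, the Heisenberg commutation facts, and the reduction of $\q_1^{(\nu)}(p)$ via Corollary~\ref{cor:qp-deriv} are exactly what the paper does. The gap is the final step. You assert that the quadratic nature of the derivative formula ``should force a quadratic functional equation'' and ``expect to identify'' it with $\hat C(w)=1-w\hat C(w)^2$, but you do not carry this out and explicitly flag the bookkeeping as the unresolved ``heart of the proof.'' As it stands this is a plan, not a proof: nothing you have written pins down which quadratic relation emerges, and the interaction between annihilation legs of $\q_1^{(\nu)}(p)$ and the $\q_1(S)$ factors is more intricate than a single Catalan-type splitting (a given $\rr$-factor consumes $n-1$ scattered $\q_1(S)$'s, not one adjacent block).

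The paper does not attempt to extract the Catalan recursion directly. Instead it (i) shows odd-order derivatives never contribute (Lemma~\ref{lem:no-odd}), (ii) packages the even ones as $\rr_n=-(2n-1)\q_1^{(2n-2)}(p)$ and proves via a contraction-counting argument (Lemmas~\ref{lem:biject} and \ref{lem:coef-Aut}) that the generating series factors multiplicatively as $H(w)=\exp\bigl(\sum_{n\ge1}D_nw^n\bigr)$, and (iii) computes the constants $D_n$ explicitly by a tree argument culminating in the weighted Cayley formula (Lemmas~\ref{lem:dp}, \ref{lem:F_n}, \ref{lem:Dn}), matching them with the coefficients of $\log C(w)$. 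So the paper trades your hoped-for quadratic recursion for an exponential/logarithmic identification backed by a concrete closed formula for $D_n$. If you want to salvage your route, you would need to replace the heuristic last paragraph with an actual derivation of the functional equation; otherwise, the missing ingredient is precisely the tree combinatorics the paper supplies.
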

For the remainder of the paper, let us put $u = \OO_p$, so $c_0(u)$ is the fundamental class of $S$, which, by a slight abuse we denote by $S$, $c_1(u)=0$, and $-c_2(u)$  is the class of a point, which we denote by $p$. Plugging this in, we obtain
\begin{equation}
\mathfrak C(u) = \q_1(S) + \sum_{\nu \ge 0} (-1)^{(\nu+1)} (\nu+1) \q_1^{(\nu)}(p) \label{eq:C}
\end{equation}
We see that we will be interested in the following special case of Theorem \ref{thm:q-deriv}. Since $\delta(p) = p \otimes p$,  have:
\begin{cor} \label{cor:qp-deriv}
	\[
	\q_n'(p) = \frac n2 \sum_{\nu} \q_\nu(p) \q_{n-\nu}(p)
	\]
\end{cor}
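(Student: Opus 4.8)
The plan is to obtain Corollary \ref{cor:qp-deriv} as a direct specialization of Theorem \ref{thm:q-deriv} to the point class $\alpha = p$. The only substantive points are the two simplifications announced in the surrounding text: that $\delta(p) = p \otimes p$, and that the canonical-class correction term of Theorem \ref{thm:q-deriv} vanishes. Everything else is Theorem \ref{thm:q-deriv} itself, which we are entitled to assume.

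First I would compute $\delta(p)$. Since $\delta$ is the pushforward along the diagonal embedding $\Delta\colon S \hookrightarrow S \times S$, and $\Delta$ carries a point $x \in S$ to $(x,x) \in S \times S$, the class $\delta(p) = \Delta_*(p)$ is the class of a single point of $S \times S$. Under the K\"unneth decomposition the point class of $S \times S$ is $p \otimes p$, so $\delta(p) = p \otimes p$. Alternatively one can argue via the projection formula $\Delta_*(p) = (p \otimes 1)\cdot[\Delta]$ together with the K\"unneth form $[\Delta] = p \otimes 1 + (\cdots) + 1 \otimes p$ of the diagonal class: every term but the last is killed because $p \cup \gamma = 0$ for any $\gamma \in H^{\ge 2}(S,\QQ)$, leaving only $p \otimes p$. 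In either case, in the notation of the theorem this gives $\q_\nu \q_{n-\nu}\delta(p) = \q_\nu(p)\,\q_{n-\nu}(p)$, with no remaining summation over K\"unneth components.

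Next I would dispose of the correction term $\binom{n}{2}\q_n(K_S . p)$. Here $K_S \in H^2(S,\QQ)$ and $p \in H^4(S,\QQ)$, so the product $K_S . p$ lands in $H^6(S,\QQ) = 0$ because $S$ is a surface; hence $K_S . p = 0$ and $\q_n(0) = 0$, and this term drops out entirely. Substituting both observations into Theorem \ref{thm:q-deriv} yields
\[
\q_n'(p) = \frac n2 \sum_\nu \q_\nu(p)\,\q_{n-\nu}(p),
\]
exactly as claimed.

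I do not anticipate any real obstacle: the content lies in Theorem \ref{thm:q-deriv}, and the corollary is a clean specialization resting only on the identification $\delta(p) = p \otimes p$ and the degree count $K_S . p = 0$. The one point worth stating carefully is the local finiteness noted after Theorem \ref{thm:q-deriv}, which guarantees that the formally infinite sum $\sum_\nu \q_\nu(p)\,\q_{n-\nu}(p)$ acts as a well-defined operator on each $\HH_k$, so that the identity is meaningful term by term.
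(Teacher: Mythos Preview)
Your argument is correct and is exactly the paper's: specialize Theorem~\ref{thm:q-deriv} to $\alpha=p$, use $\delta(p)=p\otimes p$, and note that $K_S.p=0$ for degree reasons so the second term vanishes. There is nothing to add.
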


To prove Theorem \ref{thm:cat}, we must show that the series
\begin{equation}
H(w) := \sum_{n=0}^\infty (-1)^n w^n \frac{1}{n!} \int_{S^{[n]}} \mathfrak C(u)^n \one \label{eq:H}
\end{equation}
is the generating function of the Catalan numbers.

The next proposition collects some standard facts.
\begin{prop} \label{lem:3ints}
	
	(1) Suppose $i_1, \dots, i_k$ are non-negative integers. Then the integral
	\begin{equation*}
	\int_{S^{[n]}} \q_{i_1}(\alpha_1) \cdots \q_{i_k}(\alpha_k) \one \label{eq:gen-int}
	\end{equation*}
	is equal to 0 unless $i_1= \dots = i_k = 1$ and $k=n$.

	(2) Suppose $\mathfrak m$ is a monomial in $\q_1(S)$ and $\q_{k_i}(p)$ for various integers $k_i$. Then $\int_{S^{[n]}} \mathfrak m \one = 0$ unless each $k_i$ is equal to $1$ or $-1$.
	
	(3)
	$
	\int_{S^{[n]}} \q_1(p)^n \one=1.
	$
\end{prop}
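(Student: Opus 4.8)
The plan is to derive all three parts from the bigrading carried by the $\q$-operators together with the commutation relation of Theorem~\ref{thm:q-comm}. For part (1) I would argue purely by degree counting. If some $i_j=0$ the integrand vanishes since $\q_0=0$, so assume all $i_j\ge 1$ and (by linearity) each $\alpha_j$ homogeneous. Then $\q_{i_1}(\alpha_1)\cdots\q_{i_k}(\alpha_k)\one$ lies in $\HH_{\sum i_j}$ and has algebraic degree $\sum_j(i_j-1+\deg\alpha_j)$. A nonzero integral over $S^{[n]}$ forces $\sum i_j=n$ (hence $k\le n$) and forces the algebraic degree to equal $\dim_{\mathbb C}S^{[n]}=2n$. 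Since $\deg\alpha_j\le 2$, the algebraic degree is at most $(\sum i_j)-k+2k=n+k$, which is strictly less than $2n$ once $k<n$; so $k=n$, and then $\sum i_j=n$ with all $i_j\ge 1$ forces every $i_j=1$.

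For part (2) I would normal-order $\mathfrak m\one$, moving the annihilation operators $\q_j$ with $j<0$ (noting $\q_0=0$ disposes of $j=0$) to the right past the creation operators $\q_1(S)$ and $\q_j(p)$, $j>0$, using Theorem~\ref{thm:q-comm}. Two facts collapse the bookkeeping: every commutator produced is a scalar multiple of the identity, so no new operators are ever introduced; and among the operators occurring in $\mathfrak m$ the only commutator with nonzero scalar is $[\q_1(S),\q_{-1}(p)]=(\int_S S\cup p)\,\mathrm{id}=\mathrm{id}$, since $p\cup p=0$ kills any commutator of two $p$-factors while the $S$-factors all carry index $1$ and thus can only pair with a $\q_j(p)$ when $j=-1$. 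Writing $\mathfrak m\one$ as a sum of normal-ordered (Wick) terms, each surviving term is a product of contractions of $\q_1(S)$'s against $\q_{-1}(p)$'s times a normal-ordered remainder; a remainder containing an annihilation operator kills $\one$ (its rightmost factor hits the vacuum), while a remainder that is a nonempty product of creation operators with some index $\ne 1$ integrates to zero over $S^{[n]}$ by part (1). Consequently a factor $\q_j(p)$ with $j\ge 2$ is uncontractible, hence forced into a remainder, hence kills the integral by part (1); a factor $\q_j(p)$ with $j\le -2$ is likewise uncontractible, hence forced into a remainder, hence kills $\one$. Either way $\int_{S^{[n]}}\mathfrak m\one=0$ unless every $p$-index equals $\pm 1$.

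Part (3) is immediate from the geometric description recalled in Example~\ref{ex:q}: $\q_1(p)^n\one$ is the class of a reduced point of $S^{[n]}$, which integrates to $1$ (alternatively one runs an induction on $n$ using Theorem~\ref{thm:q-comm}). The step I expect to be the real work is making part (2) honest: one must verify that the normal-ordering procedure terminates and that the Wick expansion is legitimate here — local finiteness of the operators, which holds for conformal-degree reasons as in the remark after Theorem~\ref{thm:q-deriv}, is what makes this safe — and one must be careful that an annihilation operator left in a normal-ordered remainder genuinely forces vanishing, which uses precisely that in such a remainder the annihilations stand to the right of the creations so that the rightmost one acts on the vacuum. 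Parts (1) and (3) are then routine degree bookkeeping and a direct appeal to Example~\ref{ex:q}.
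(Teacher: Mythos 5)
Your proposal is correct and follows essentially the same route as the paper: part (1) by comparing algebraic degree with $\dim S^{[n]}$, part (2) by commuting the negative-index operators to the right using Theorem \ref{thm:q-comm} (with $p\cup p=0$ and $\q_k(\alpha)\one=0$ for $k\le 0$ doing the work, then part (1) finishing off any surviving index $\ge 2$), and part (3) from Example \ref{ex:q}. The Wick/normal-ordering packaging of part (2) is just a systematic rephrasing of the paper's ``move the $\q_{-1}$'s to the right'' argument, not a genuinely different method.
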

\begin{proof}
	Part (1) follows by comparing the algebraic degree to the dimension of $S^{[n]}$.
		
	For (2), notice that if any $k_j$ was less than $-1$, it would commute with all other factors, and so could be moved to the right and act on $\one$ to get 0. (Note that $\q_k(\alpha)\one=0$ for any $k\le 0$.) If any $k_j$ was greater than one, then we use the commutation formula to move any $\q_{-1}$'s towards the right without disturbing the large $k_j$. Then  apply (1).
	 
	Part (3) follows from Example \ref{ex:q}.
\end{proof}

\begin{lem} \label{lem:biject}
	Suppose $\mathfrak m$ is a monomial of conformal degree $n$ containing factors of $\q_1(p)$, $\q_{-1}(p)$, and $\q_1(S)$. Then
	$\int_{S^{[n]}} \mathfrak m \one$ is equal to the number bijections between the $\q_{-1}(p)$'s and the $\q_1(S)$'s so that each $\q_1(S)$ is to the right of its associated $\q_{-1}(p)$, times a factor of $-1$ for each $\q_{-1}(p)$.
	
	In particular, if the number of $\q_{-1}(p)$'s is not equal to the number of $\q_{1}(S)$'s, then   $\int_{S^{[n]}} \mathfrak m \one=0$.
\end{lem}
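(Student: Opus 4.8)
The plan is to reduce the entire statement to the single nontrivial commutation relation among the operators occurring in $\mathfrak m$. By Theorem~\ref{thm:q-comm}, since $p^2 = 0$ in $H^*(S,\QQ)$, the operators $\q_1(p)$ and $\q_{-1}(p)$ commute with each other, and $\q_1(p)$ commutes with $\q_1(S)$ because $1+1 \ne 0$; the only noncommuting pair is $\q_{-1}(p)$ and $\q_1(S)$, for which
\[
\q_{-1}(p)\,\q_1(S) = \q_1(S)\,\q_{-1}(p) - \operatorname{id},
\]
coming from $[\q_{-1}(p),\q_1(S)] = -\bigl(\int_S p\bigr)\operatorname{id} = -\operatorname{id}$. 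We also use that $\q_{-1}(p)\one = 0$ (as $\q_k(\alpha)\one = 0$ for $k\le 0$) and that $\int_{S^{[n]}}\q_1(p)^n\one = 1$ by Proposition~\ref{lem:3ints}(3).

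Write $\mathfrak m$ as a word with $a$ letters $\q_1(p)$, $b$ letters $\q_{-1}(p)$, and $c$ letters $\q_1(S)$, so the conformal degree hypothesis says $a - b + c = n$. First I would settle the ``in particular'' assertion by a degree count: $\q_1(p)$ has algebraic degree $2$ whereas $\q_{-1}(p)$ and $\q_1(S)$ have algebraic degree $0$, so $\mathfrak m\one$ is homogeneous of algebraic degree $2a$; if $b>c$ then $2a > 2n = \dim_{\mathbb C}S^{[n]}$ forces $\mathfrak m\one=0$, and if $b<c$ then $2a<2n$ forces $\int_{S^{[n]}}\mathfrak m\one=0$. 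Since a bijection between a $b$-element set and a $c$-element set exists only when $b=c$, both sides vanish unless $b=c$, so assume $b=c$ (hence $a=n$) and induct on $b$. The base case $b=c=0$ is $\mathfrak m=\q_1(p)^n$, with integral $1$, matching the unique empty bijection and the sign $(-1)^0$.

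For the inductive step I would isolate the rightmost letter $\q_{-1}(p)$, writing $\mathfrak m = \mathfrak m_1\,\q_{-1}(p)\,\mathfrak m_2$ with $\mathfrak m_2$ a word in $\q_1(p)$ and $\q_1(S)$ alone. Commuting this $\q_{-1}(p)$ to the right through $\mathfrak m_2$ — freely past each $\q_1(p)$ and via the displayed relation past each $\q_1(S)$ — and then annihilating the trailing $\q_{-1}(p)$ against $\one$ gives
\[
\mathfrak m\,\one = -\sum_{j} \mathfrak m_1\,\mathfrak m_2^{(j)}\,\one,
\]
the sum running over the (possibly zero) letters $\q_1(S)$ in $\mathfrak m_2$, where $\mathfrak m_2^{(j)}$ is $\mathfrak m_2$ with the $j$th such letter removed. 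Each $\mathfrak m_1\mathfrak m_2^{(j)}$ still has conformal degree $n$ and now $b-1$ letters $\q_{-1}(p)$ and $b-1$ letters $\q_1(S)$, so by induction its integral is $(-1)^{b-1}$ times its number of admissible bijections, and the prefixed minus sign turns this into $(-1)^b$. Finally I would check that the admissible bijections of $\mathfrak m$ are enumerated by the pairs consisting of a choice of partner $\q_1(S)$ for the rightmost $\q_{-1}(p)$ — which must lie to its right, hence in $\mathfrak m_2$ — together with an admissible bijection of the shortened word $\mathfrak m_1\mathfrak m_2^{(j)}$; deleting a matched pair and concatenating leaves the relative order of all remaining letters intact, so all remaining ``to the right of'' constraints are preserved, and the correspondence is a bijection. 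I expect this final bookkeeping — confirming that the recursion obeyed by the integral and the one obeyed by the combinatorial count coincide, signs included — to be the only step demanding care; everything before it is a routine application of Theorem~\ref{thm:q-comm} and Proposition~\ref{lem:3ints}.
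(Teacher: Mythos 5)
Your proof is correct and rests on the same mechanism as the paper's: the only nontrivial commutator $[\q_{-1}(p),\q_1(S)]=-\operatorname{id}$ supplied by Theorem~\ref{thm:q-comm}, together with $\q_{-1}(p)\one=0$, drives an induction on the number of $\q_{-1}(p)$'s. The only difference is bookkeeping: the paper first rewrites the signed bijection count as the explicit falling-factorial product \eqref{eq:KP} and verifies that formula by swapping one adjacent $\q_{-1}(p)\q_1(S)$ pair at a time, whereas you keep the count combinatorial and match the integral's recursion (push the rightmost $\q_{-1}(p)$ to the vacuum, producing a sum over deleted $\q_1(S)$'s) against the corresponding ``delete a matched pair'' recursion on admissible bijections --- both are valid, and your variant, including the separate degree-count disposal of the case $b\neq c$, is if anything slightly more self-contained.
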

\begin{proof}
	Letting $K_\ell$ and $P_\ell$ denote the partial sums $\sum_{j=\ell}^m k_j$ and $\sum_{j=\ell}^m p_j$ respectively,  the lemma is equivalent to the assertion that
	\begin{align}
	\int_{S^{[n]}} \q_1(p)^n &\prod_{j=1}^m \left(\q_{-1}(p)^{p_j} \q_1(S)^{k_j} \right)\one = \\
	&\prod_{\ell=1}^m (-1)^{p_\ell}(K_\ell - P_{\ell+1})(K_\ell - P_{\ell+1}-1) \cdots (K_\ell - P_{\ell}+1) \label{eq:KP}
	\end{align}
	By Theorem \ref{thm:q-comm}, we have the recurrence
	\begin{align*}
	&\int_{S^{[n]}} \q_1(p)^n \prod_{j=1}^m \left(\q_{-1}(p)^{p_j} \q_1(S)^{k_j} \right)\one \\
	= &\int_{S^{[n]}} \q_1(p)^n \prod_{j=1}^{m-1} \left(\q_{-1}(p)^{p_j} \q_1(S)^{k_j} \right) \left(\q_{-1}(p)^{p_m-1} \q_1(S)\q_{-1}(p)\q_1(S)^{k_m-1}\right)\one\\
	- &\int_{S^{[n]}} \q_1(p)^n \prod_{j=1}^{m-1} \left(\q_{-1}(p)^{p_j} \q_1(S)^{k_j} \right)  \left(\q_{-1}(p)^{p_m-1}\q_1(S)^{k_m-1}\right) \one
	\end{align*}
	To prove the formula \eqref{eq:KP} by induction, we substitute it into the recurrence and cancel the first $m-1$ factors which appear identically on both sides. We are then reduced to proving
	\begin{align*}
	&(-1)^{p_m}(k_m)(k_m-1) \cdots (k_m-p_m+1) \\
	= &(-1)(k_m-1) \cdot (-1)^{p_m-1}(k_m - 1)(k_m - 2) \cdots (k_m-p_m+1) \\
	-&(-1)^{p_m-1}(k_m - 1)(k_m - 2) \cdots (k_m-p_m+1)
	\end{align*}
	which is clear.
\end{proof}
 
\begin{lem} \label{lem:no-odd}
Suppose $\mathfrak m$ is a monomial in $\q_1(X)$ and derivatives of $\q_1(p)$ and
\begin{equation} 
 \int_{S^{[n]}} \mathfrak m \one \neq 0 \label{eq:intm}
\end{equation}
Then there are no derivatives of odd order occuring in $\mathfrak m$.
\end{lem}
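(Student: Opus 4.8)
The plan is to expand every derivative occurring in $\mathfrak m$ into ordinary $\q$-operators, apply the vanishing criterion of Proposition \ref{lem:3ints}(2), and then finish with a parity count. Write $\mathfrak m$ as a product of factors $\q_1(S)$ together with factors $\q_1^{(\nu_j)}(p)$ for certain orders $\nu_j\ge 0$; we must show that if some $\nu_j$ is odd then $\int_{S^{[n]}}\mathfrak m\one=0$. The crucial point is that $p$ is a point class, so $K_S\cdot p=0$ and Corollary \ref{cor:qp-deriv} has no ``single $\q$'' correction term: $\q_k'(p)=\tfrac{k}{2}\sum_\nu\q_\nu(p)\q_{k-\nu}(p)$, a sum of products of two point-class operators (and the $\q_0$ terms vanish identically). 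Iterating $[\mathfrak d,-]$ by the Leibniz rule therefore expands each $\q_1^{(\nu_j)}(p)$ as a $\QQ$-linear combination of products $\q_{k_1}(p)\cdots\q_{k_m}(p)$ with all $k_i\neq 0$ and $\sum_i k_i=1$; a short induction on $\nu_j$ shows that every surviving term has exactly $m=\nu_j+1$ factors, since each differentiation turns one nonzero factor into two. Local finiteness makes these expansions genuinely finite once everything is applied to $\one$.

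Next I would substitute these expansions into $\mathfrak m$, writing $\int_{S^{[n]}}\mathfrak m\one$ as a $\QQ$-linear combination of integrals $\int_{S^{[n]}}\mathfrak m'\one$, where each $\mathfrak m'$ is a monomial in $\q_1(S)$ and in various $\q_k(p)$. By Proposition \ref{lem:3ints}(2), each such integral is $0$ unless every exponent $k$ appearing in $\mathfrak m'$ equals $1$ or $-1$.

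The punchline is the parity count. Fix an index $j$ with $\nu_j$ odd and look, inside a given $\mathfrak m'$, at the $\nu_j+1$ factors $\q_k(p)$ that came from expanding $\q_1^{(\nu_j)}(p)$; their indices sum to $1$, because both $[\mathfrak d,-]$ and composition of operators preserve conformal degree. If all of these $k$ were $\pm 1$, say $s$ equal to $+1$ and $t$ equal to $-1$, then $s+t=\nu_j+1$ and $s-t=1$, so $2s=\nu_j+2$, which is impossible when $\nu_j$ is odd. Hence at least one of these indices lies outside $\{-1,1\}$, and so $\int_{S^{[n]}}\mathfrak m'\one=0$ for every term in the expansion; therefore $\int_{S^{[n]}}\mathfrak m\one=0$, as claimed.

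The argument is short, so there is no serious obstacle; the one place that needs care is the bookkeeping in the expansion step, namely verifying that every nonzero term produced by $\q_1^{(\nu_j)}(p)$ has exactly $\nu_j+1$ factors and that the ``indices sum to $1$'' constraint survives all the differentiations, since the parity count relies on both. It is precisely $K_S\cdot p=0$ that removes the correction term in Corollary \ref{cor:qp-deriv} and makes the clean ``one derivative $=$ one extra $\q$-factor'' accounting valid.
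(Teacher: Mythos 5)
Your proof is correct and follows essentially the same route as the paper: iterate Corollary \ref{cor:qp-deriv} to expand $\q_1^{(\nu)}(p)$ into products of $\nu+1$ operators $\q_{k_i}(p)$ with $\sum k_i=1$, observe that when $\nu$ is odd this forces some $k_i\notin\{-1,1\}$ (the paper phrases this as ``at least one $n_i$ must be even''), and conclude by Proposition \ref{lem:3ints}(2). The only difference is cosmetic bookkeeping in the parity step.
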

\begin{proof}
Assume $\mathfrak m$ contains a factor $\q_1^{(s)}(p)$ with $s$ odd.
By iterating the formula in Corollary \ref{cor:qp-deriv}, we can reduce $\q_1^{(s)}(p)$ to a sum of monomials of the form $\prod_{i=1}^{s+1} \q_{n_i}(p)$, where $\sum_i {n_i}=1$. Since $s$ is odd, at least one of the $n_i$ must be even. We then similarly reduce all other derivatives, and apply Proposition \ref{lem:3ints}, (2).
\end{proof}

By Lemma \ref{lem:no-odd}, we see that we may omit the terms with $\nu$ odd in \eqref{eq:C}. Doing this, and letting $\rr_n = -(2n-1)\q_1^{(2n-2)}(p)$, we obtain
\[
\CCC(u) := \q_1(S) + \sum_{n\ge 1} \rr_n
\]
Iterating Corollary \ref{cor:qp-deriv}, we see that there are constants $D_n$ (which we will compute later) so that
\begin{equation}
\rr_n = -nD_n \q_1(p)^{n}\q_{-1}(p)^{n-1} + R_n \label{eq:D_n}
\end{equation}
 where $R_n$ is composed of terms with a $\q_k(p)$ with $k \neq \pm 1$ (which will not contribute, by Lemma \ref{lem:3ints} (2)).

The operator $\rr_n$ in fact has conformal degree $1$, not $n$. However we have chosen the subscript since, by Lemma \ref{lem:biject}, each $\rr_n$ in a term from the expansion of $\exp(\CCC(u))$ must be accompanied by $(n-1)$ $\q_1(S)$'s (or the term will integrate to $0$), and altogether this gives conformal degree $n$.

Now let $\bar y = (y_1, y_2, \dots)$ be a sequence of commuting variables indexed by positive integers and we put
\[
\CCC(u)(\bar  y) := \q_1(S) + \sum_{n\ge 1} y_n\rr_n.
\]
It follows from our discussion above that 
\begin{equation}
H(w)=\sum_n \frac{(-1)^n}{n!}\int_{S^{[n]}} \CCC(u)(w,w^2,w^3,\dots)^n\one . \label{eq:Hww}
\end{equation}

Assume $n_1, \dots, n_k$ are positive integers with $\sum n_i=n$. Let $\Aut(n_1, \dots, n_k)$ be the product of the factorials of the multiplicities with which each integer appears in the list $n_1, \dots, n_k$. Then we have:
\begin{lem} \label{lem:coef-Aut}
 The coefficient of $y_{n_1} \cdots y_{n_k}$ in $\frac{(-1)^n}{n!}\int_{S^{[n]}}\CCC(u)(\bar y)^n\one$ is \[\frac{1}{\Aut(n_1, \dots, n_k)}\prod_{j=1}^k D_{n_j}.\]
\end{lem}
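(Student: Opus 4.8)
The plan is to compute the stated coefficient by expanding
$\CCC(u)(\bar y)^n=\prod_{i=1}^n\bigl(\q_1(S)+\sum_{m\ge1}y_m\rr_m\bigr)$ as a non-commutative product, isolating the operator words that can contribute to the monomial $y_{n_1}\cdots y_{n_k}$, putting each into the form handled by Lemma \ref{lem:biject}, and then summing a purely combinatorial quantity. A term in the expansion contributes $y_{n_1}\cdots y_{n_k}$ exactly when $k$ of the $n$ factors supply an $\rr$-operator whose indices form the multiset $\{n_1,\dots,n_k\}$ while the other $n-k$ supply $\q_1(S)$; there are $n!/\bigl((n-k)!\,\Aut(n_1,\dots,n_k)\bigr)$ such terms.

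First I would normalize each such term. Substituting $\rr_{n_j}=-n_jD_{n_j}\q_1(p)^{n_j}\q_{-1}(p)^{n_j-1}+R_{n_j}$ from \eqref{eq:D_n} and discarding the $R_{n_j}$ by Proposition \ref{lem:3ints}(2), a term becomes $\prod_{j=1}^k(-n_jD_{n_j})$ times a monomial in $\q_1(p)$, $\q_{-1}(p)$, $\q_1(S)$. Theorem \ref{thm:q-comm} gives $[\q_1(p),\q_1(p)]=[\q_1(p),\q_1(S)]=0$ (since $\delta_2=0$) and $[\q_1(p),\q_{-1}(p)]=\int_Sp^2=0$ (degree reasons), so all $n=\sum_jn_j$ copies of $\q_1(p)$ move to the front and the term becomes $\prod_j(-n_jD_{n_j})\cdot\q_1(p)^n\,W$, with $W$ a word containing $n-k$ copies of $\q_{-1}(p)$ and $n-k$ copies of $\q_1(S)$. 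Lemma \ref{lem:biject} then evaluates its integral over $S^{[n]}$ as $(-1)^{n-k}b(W)$, where $b(W)$ is the number of bijections from the $\q_{-1}(p)$'s to the $\q_1(S)$'s of $W$ sending each $\q_1(S)$ to the right of its partner.

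The substance is to show $\sum_{\mathrm{terms}}b(W)=n!/\bigl(\Aut(n_1,\dots,n_k)\prod_jn_j\bigr)$; granting this, the signs $(-1)^n(-1)^k(-1)^{n-k}=1$ and the factor $\prod_jn_j$ from $\prod_j(-n_jD_{n_j})$ cancel against $n!$ and $\prod_jn_j$, leaving exactly $\tfrac1{\Aut}\prod_jD_{n_j}$. To evaluate the sum I would group the $\q_{-1}(p)$'s of $W$ by the $\rr$-block that produced them: all $n_j-1$ copies from the block of index $n_j$ occupy one factor-slot, hence see the same $\q_1(S)$'s to their right, so a valid bijection is the same data as a partition of the $n-k$ copies of $\q_1(S)$ into sets $S_1,\dots,S_k$ with $|S_j|=n_j-1$ and $S_j$ lying entirely to the right of the block of index $n_j$, together with an arbitrary matching within each block ($(n_j-1)!$ choices). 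Coloring each $\q_1(S)$ by the index of its block, $\sum_{\mathrm{terms}}b(W)$ equals $\prod_j(n_j-1)!$ times the number of linear arrangements of the $k$ $\rr$-blocks and, for each $j$, $n_j-1$ indistinguishable color-$j$ tokens, subject to every color-$j$ token lying to the right of its block — and replacing indistinguishable equal-index blocks by distinguishable ones multiplies this count by $\Aut(n_1,\dots,n_k)$.

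The clean point that finishes it is that this last arrangement count (with distinguishable blocks) is the multinomial coefficient $\binom{n}{n_1,\dots,n_k}$: treating the color-$j$ tokens as temporarily distinct, the $k$ events ``$B_j$ precedes all of its $n_j-1$ tokens'' concern disjoint sets of relative orders, are therefore independent, and each has probability $1/n_j$, so $n!/\prod_jn_j$ of the $n!$ orderings are admissible; dividing by $\prod_j(n_j-1)!$ to re-identify same-colored tokens gives $n!/\prod_jn_j!$. Assembling, $\sum_{\mathrm{terms}}b(W)=\tfrac1{\Aut}\prod_j(n_j-1)!\cdot\tfrac{n!}{\prod_jn_j!}=\tfrac1{\Aut}\cdot\tfrac{n!}{\prod_jn_j}$, as required. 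I expect the main obstacle to be the bookkeeping in the previous paragraph: correctly threading the per-block $(n_j-1)!$ matchings and the $\Aut$-factor through the reduction so that what remains is precisely a multinomial coefficient; the symmetry argument itself is short, and the operator-theoretic reductions are routine given Lemma \ref{lem:biject} and the commutation relations.
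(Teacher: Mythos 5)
Your proposal is correct and follows essentially the same route as the paper's proof: reduce each contributing word via \eqref{eq:D_n} and Proposition \ref{lem:3ints}(2), evaluate with Lemma \ref{lem:biject}, and show that the total number of (word, assignment) pairs is $\frac{1}{\Aut(n_1,\dots,n_k)}\binom{n}{n_1,\dots,n_k}$. The only difference is cosmetic: you establish that count by a symmetry/independence argument (each block precedes its tokens with probability $1/n_j$), whereas the paper constructs the pairs directly by distributing $n_i$ chosen spots per block, with the block in the leftmost spot.
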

\begin{proof}
 Consider a monomial $\mathfrak m$ in the $\rr_{n_i}$'s and $n-k$ $\q_1(S)$'s. To evaluate $\int_{S^{[n]}} \mathfrak m \one$, we use Lemma \ref{lem:biject}. Let $N_{\mathfrak m}$ be the number of ways to assign $n_i-1$ $\q_1(S)$'s to each $\rr_{n_i}$ so that each $\q_1(S)$ is assigned and is to the right of its assigned $\rr_{n_i}$. After substituting in \eqref{eq:D_n}, we can extend this assignment to a bijection between $\q_1(S)$'s and $\q_{-1}(p)$'s in $\prod (n_i-1)!$ ways. So we obtain
 \begin{align}
 \int_{S^{[n]}} \mathfrak m \one &= N_{\mathfrak m} \left(\prod_{j=1}^k (-n_j D_{n_j})\right) \left(\prod_{j=1}^k (-1)^{n_j-1}(n_j-1)!\right)\\
  &= (-1)^n N_{\mathfrak m} \prod_{j=1}^k n_j!D_{n_j}. \label{eq:N_m}
 \end{align}

Next, we observe that
\[
\sum_{\mathfrak m} N_{\mathfrak m} = \frac{1}{\Aut(n_1, \dots, n_k)}\binom{n}{n_1, \dots, n_k}
\]
where the sum is over all monomials $\mathfrak m$ in the $\rr_{n_i}$'s and $n-k$ $\q_1(S)$'s. To see this, start with $n$ empty spots. Then, for each $i$, pick $n_i$ of the spots, and insert $\rr_{n_i}$ is the left most of the selected spots, and $n_i-1$ $\q_1(S)$'s in the remaining selected spots. One can see that his process will construct each $\mathfrak m$ together with an assignment (as counted by $N_{\mathfrak m}$), overcounted by $\Aut(n_1, \dots, n_k)$.

We now sum \eqref{eq:N_m} over all $\mathfrak m$ and multiply by $\frac{(-1)^n}{n!}$ to obtain the Lemma.
\end{proof}

\begin{proof}[Proof of Theorem \ref{thm:cat}]
The coefficients in Lemma \ref{lem:coef-Aut} are exactly those that show that
\[\sum_n \frac{(-1)^n}{n!}\int_{S^{[n]}} \CCC(u)(\bar y)^n\one = \exp\left(\sum_{n\ge 1} D_{n}y_n\right)
\]
Hence, by \eqref{eq:Hww}, we have 
	 $H(w) = \exp\left(\sum_{n\ge 1} D_nw^n\right)$.
Thus we have reduced the proof of Theorem \ref{thm:cat} to the claim that the generating series of the numbers $D_n$ is the logarithm of the generating series of the Catalan numbers.

Differentiating the formula $C(x) = 1 + xC(x)^2$ and then dividing by $C(x)$, one obtains
\[
C'(x)/C(x) = C(x) + 2xC'(x).
\]
Integrating this, one obtains that the coefficient of $x^n$ in $\log(C(x))$ is $\frac1n C_{n-1} + 2\frac{n-1}{n} C_{n-1} = \frac{2n-1}{n}C_{n-1}= \frac{2n-1}{n}\cdot\frac{(2n-2)!}{n!(n-1)!}$. Lemma \ref{lem:Dn} in the next section will check that this number is equal to $D_n$.
\end{proof}

\subsection{Trees}
A \emph{tree} is a connected graph with no cycles.

A \emph{binary tree} is a tree with a distinguished node called the root. Furthermore, each node has either no children, a left child, a right child, or both. A binary tree is \emph{full} if each node has 0 or 2 children. An \emph{increasing binary tree} is a binary tree that comes equipped with a (total) ordering of the nodes so that a parent always precedes its children. A \emph{leaf} is a node with no children. An \emph{internal node} is a node with at least one child. Let $B_n$ be the set of all increasing binary trees with $n$ nodes. We call the elements of $B_n$ increasing binary $n$-trees. We consider the empty tree to be the unique increasing binary $0$-tree.

Given an increasing binary $n$-tree $T$, one can produce a full binary tree $\tilde T$ by adding $n+1$ leaves in the obvious way. Let us associate variables $x_1, \dots, x_{n+1}$ to each of these new leaves, from left to right. Then each internal node has a ``weighted hook length" given by the sum of the variables associated to the leaves descended from that node. Then we define the weight $\tilde T_{\wt}(x_1, \dots, x_n)$ of $\tilde T$  to be the product of the weighted hook lengths over all internal nodes. We define a polynomial $\tilde T(x_1, \dots, x_{n+1})$ to be the sum of the weights of $\tilde T$ over all possible permutations of $x_1, \dots, x_{n+1}$. Finally, we define
\[
F_n(x_1, \dots, x_{n+1}) = \sum_{T \in B_n} \tilde T(x_1, \dots, x_{n+1}).
\]

Our interest in these trees comes from the following observation.
\begin{lem} \label{lem:dp}
	\[
	\q^{(n)}_k(p) = \frac{1}{2^n}\sum_{\vi \text{ sorted}}  \frac{F_n(\vi)}{\Aut(\vi)}\q_{i_1}(p) \cdots \q_{i_{n+1}}(p)
	\]
	where the sum is over all vectors $\vi = (i_1, \dots, i_{n+1})$ of integers with $i_{1} \ge \dots\ge i_{n+1}$ and $\sum i_j = k$. 
\end{lem}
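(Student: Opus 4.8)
The plan is to prove Lemma~\ref{lem:dp} by induction on $n$, using Corollary~\ref{cor:qp-deriv} as the recursive step and checking that the combinatorial recursion it induces on the left-hand side matches the recursion satisfied by the tree polynomials $F_n$. The base case $n=0$ is trivial: $B_0$ consists of the empty tree, $\tilde T$ has a single leaf with variable $x_1$, there are no internal nodes so $\tilde T_{\wt}=1$, hence $F_0(x_1)=1$ and the formula reads $\q_k(p)=\q_k(p)$ with the lone vector $\vi=(k)$.

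For the inductive step, I would apply $\q_k^{(n)}(p) = [\mathfrak d, \q_k^{(n-1)}(p)]$ and feed in the inductive expression for $\q_k^{(n-1)}(p)$ as a sum over sorted vectors $\vi=(i_1,\dots,i_n)$. Taking the commutator with $\mathfrak d$ distributes over the product $\q_{i_1}(p)\cdots\q_{i_n}(p)$ by the Leibniz rule, producing a sum of terms in each of which one factor $\q_{i_j}(p)$ gets replaced by $\q_{i_j}'(p) = \frac{i_j}{2}\sum_\nu \q_\nu(p)\q_{i_j-\nu}(p)$ (Corollary~\ref{cor:qp-deriv}). So at the level of raw (unsorted) multi-indices, $\q_k^{(n)}(p)$ is obtained from $\q_k^{(n-1)}(p)$ by: pick one of the $n$ slots holding value $a$, remove it, and insert two new slots with values $b,a-b$ summing to $a$, weighted by $a/2$. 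The task is then to recognize this ``split one entry into two, weighted by half the old entry'' operation as exactly the recursion that builds $F_n$ from $F_{n-1}$: an increasing binary $n$-tree is an increasing binary $(n-1)$-tree with one extra node attached as the $n$th (last in the ordering) node at one of the available child-slots, which on the level of the completed tree $\tilde T$ corresponds to taking one of the $n$ existing leaves, giving it two children, and thereby splitting its variable across the two new leaves — the weighted hook length of the new internal node being the sum of the two child variables, i.e.\ the old leaf variable, which after summing over which variable splits and over permutations contributes the factor $1/2$ and the $i_j$ weight. I would make this precise by comparing the two sides slot-by-slot, keeping careful track of the $\frac{1}{2^n}$, the $\Aut(\vi)$ normalizations, and the passage from unsorted to sorted vectors (the $\Aut$ factors are exactly the overcounting introduced by sorting, and should cancel cleanly against the multiplicities arising in the Leibniz expansion).

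The main obstacle I anticipate is bookkeeping rather than conceptual: reconciling the symmetrization conventions. The tree polynomial $\tilde T(x_1,\dots,x_{n+1})$ is already a full symmetrization over all $(n+1)!$ permutations of the leaf variables, while the operator side naturally produces unordered products $\q_{i_1}(p)\cdots\q_{i_{n+1}}(p)$ that I then re-sort; one must verify that the weight $F_n(\vi)/\Aut(\vi)$ assigned to a sorted vector is the correct pooled coefficient, i.e.\ that summing the permuted tree weights over all orderings consistent with a given sorted $\vi$ and dividing by $\Aut(\vi)$ reproduces exactly what the Leibniz recursion outputs. I would isolate this as a purely combinatorial sublemma about how $F_n$ evaluated at a fixed multiset of arguments decomposes under ``split a part,'' and check it directly; once that identity is in hand, matching it termwise with the Leibniz/Corollary~\ref{cor:qp-deriv} expansion of $[\mathfrak d,\q_k^{(n-1)}(p)]$ finishes the induction. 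I'd also remark that one does not need $k=1$ here — the statement and proof go through for arbitrary $k$, which is why the lemma is stated in that generality, and only afterward will it be specialized (with the sorted-vector/$\Aut$ language) to extract the constants $D_n$.
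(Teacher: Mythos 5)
Your proposal is correct and follows essentially the same route as the paper: the paper's proof simply asserts that ``iterating Corollary~\ref{cor:qp-deriv} and using the product rule'' yields the unsorted sum over increasing binary trees weighted by $\tilde T_{\wt}$, and then sorts the commuting factors $\q_{i_j}(p)$ with the $\Aut(\vi)$ factor compensating for the symmetrization in $\tilde T$. Your induction merely makes that iteration explicit, via the correct bijection between attaching the $n$th node of an increasing binary tree and splitting a leaf variable of $\tilde T$, and you correctly identify the sorted-versus-unsorted bookkeeping as the only remaining check.
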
 
\begin{proof}
After iterating Corollary \ref{cor:qp-deriv} and using the product rule, one can see that
\[
 \q^{(n)}_k(p) = \frac{1}{2^n}\sum_{T\in B_n}\sum_{\vi {\text{ unsorted}}}  \tilde T_{\wt}(\vi)\; \q_{i_1}(p) \cdots \q_{i_{n+1}}(p)
\]
where the second sum is over \emph{all} vectors $\vi$ with $\sum i_j = k$.

Since the $\q_{i_j}(p)$ commute with each other, we may sort them in each term and obtain
\[
\q^{(n)}_k(p) = \frac{1}{2^n}\sum_{T\in B_n}\sum_{\vi {\text{ sorted}}}  \frac{\tilde T(\vi)}{\Aut(\vi)}\; \q_{i_1}(p) \cdots \q_{i_{n+1}}(p)
\]
The $\Aut$ factor compensates for the fact that $\tilde T(\vi)$ is the sum over all permutations, including those that leave $\vi$ invariant.

But now the result follows from the definition of $F_n$.
\end{proof}

We thank Gjergji Zaimi, who pointed out to us the following lemma and its proof.
\begin{lem} \label{lem:F_n}
We have
\begin{equation}
F_n(x_1, \dots, x_{n+1}) = 2^nn!(x_1 + \cdots + x_n)^n \label{eq:Fn}
\end{equation}
\end{lem}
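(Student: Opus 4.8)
The plan is to prove Lemma~\ref{lem:F_n} by induction on $n$, via a recursion for the polynomials $F_n$ that mirrors the recursive structure of increasing binary trees, with the inductive step absorbed into a classical Abel--Hurwitz summation identity. Throughout write $s := x_1+\cdots+x_{n+1}$.

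First I would establish the recursion
\[
F_n(x_1,\dots,x_{n+1}) \;=\; s\sum_{j=0}^{n-1}\binom{n-1}{j}\sum_{\substack{A\subseteq\{1,\dots,n+1\}\\ |A|=j+1}} F_j\big((x_a)_{a\in A}\big)\,F_{n-1-j}\big((x_b)_{b\notin A}\big),
\]
with base case $F_0=1$. Each $T\in B_n$ has the minimal node as root, and its left and right subtrees (with the inherited orders) lie in $B_j$ and $B_{n-1-j}$ for some $0\le j\le n-1$; a given pair of abstract subtrees in $B_j\times B_{n-1-j}$ arises from exactly $\binom{n-1}{j}$ trees $T$, one for each interleaving of the two node-orders. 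In $\widetilde T$ the weighted hook length at the root is always $s$, those at the internal nodes of the left subtree involve only the leftmost $j+1$ leaves, and those of the right subtree only the remaining $n-j$ leaves; summing the hook-product over all leaf permutations and sorting these according to which $(j+1)$-subset $A$ of variables lands on the left leaves factors each term as $s\cdot\widetilde{T_L}\big((x_a)_{a\in A}\big)\,\widetilde{T_R}\big((x_b)_{b\notin A}\big)$. Summing over $T$ and recognizing $\sum_{T_L\in B_j}\widetilde{T_L}=F_j$ (and likewise for the right factor) produces the recursion.

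Next, substituting the inductive hypothesis $F_k(y_1,\dots,y_{k+1})=2^kk!\,(y_1+\cdots+y_{k+1})^k$ for $k<n$ into the recursion and using $\binom{n-1}{j}\,j!\,(n-1-j)! = (n-1)!$ together with $2^j2^{n-1-j}=2^{n-1}$, the desired equality $F_n=2^nn!\,s^n$ becomes equivalent to the polynomial identity
\[
\sum_{\emptyset\neq A\subsetneq\{1,\dots,n+1\}}\Big(\sum_{a\in A}x_a\Big)^{|A|-1}\Big(\sum_{b\notin A}x_b\Big)^{n-|A|} \;=\; 2n\,s^{\,n-1}.
\]
I would prove this from the weighted rooted Cayley formula $\big(\sum_{v\in V}x_v\big)^{|V|-1}=\sum_{\rho}\prod_{v\in V}x_v^{c_\rho(v)}$, the sum over rooted trees $\rho$ on $V$ weighted by the number of children $c_\rho(v)$ of each vertex (itself a consequence of the Pr\"ufer correspondence). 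The term indexed by a proper nonempty $A$ on the left enumerates pairs consisting of a rooted tree on $A$ and a rooted tree on its complement; joining the two roots by an edge and marking that edge gives a bijection — after passing from ordered complementary pairs to unordered two-block partitions of $\{1,\dots,n+1\}$, the two-to-one overcount being exactly the factor $2$ — with pairs (tree $T$ on $\{1,\dots,n+1\}$, marked edge). This bijection multiplies the vertex weight $\prod_i x_i^{\deg_T(i)-1}$ by precisely the variable of the newly attached endpoint on each side, hence is weight-preserving; summing over the $n$ edges of each of the trees and applying the unrooted weighted Cayley formula $\sum_T\prod_i x_i^{\deg_T(i)-1}=s^{\,n-1}$ yields $2n\,s^{\,n-1}$, as required.

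The main obstacle is the last identity: it is exactly the point where the combinatorics of increasing binary trees is traded for that of labelled trees, and all the care lies in matching the weights across the bijection — tracking which endpoint's variable picks up the extra power, and the ordered-versus-unordered factor of $2$. The recursion, the base case, and the algebraic simplification of the inductive step are routine. One could instead bypass the bijection entirely by citing Hurwitz's multivariable generalization of Abel's binomial identity, of which the displayed formula is a special case, but the tree bijection is self-contained and seems cleanest here.
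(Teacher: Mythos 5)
Your proposal is correct and follows essentially the same route as the paper: the same root-deletion recursion for $F_n$ (your sum over subsets $A$ with the interleaving binomial is the paper's sum over $S\in\mathcal P$), the same reduction of the inductive step to the identity $\sum_{\emptyset\neq A\subsetneq\{1,\dots,n+1\}}(\sum_A x_a)^{|A|-1}(\sum_{A^c}x_b)^{n-|A|}=2n\,s^{n-1}$, and the same proof of that identity via the weighted Cayley formula and the join-two-rooted-trees bijection with the $2n$-fold overcount. (Note in passing that your statement $F_n=2^nn!\,s^n$ with $s=x_1+\cdots+x_{n+1}$ silently corrects the typo $(x_1+\cdots+x_n)^n$ in the lemma as printed.)
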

\begin{proof}
A binary tree can be split into two (possibly empty) binary trees by deleting the root. An increasing binary $k$-tree and an increasing binary $m$-tree can be joined into a binary $(k+m+1)$-tree by attaching both roots to a new root. The ordering can be extended in $\binom{k+m}{k}$ ways to form an increasing binary $(k+m+1)$-tree. We also note that the root of a binary $n$-tree has hook length $(x_1 + \cdots x_{n+1})$.

Let $\mathcal P$ be the set of proper, non-empty subsets of $\{x_1, \dots, x_{n+1}\}$. The discussion of the previous paragraph gives us the recursion
\begin{equation}
	F_n(x_1, \dots, x_{n+1}) = (x_1 + \cdots x_{n+1})\sum_{S \in \mathcal P} \binom{n-1}{|S|-1}F_{|S|-1}(S)F_{n-|S|}(S^c). \label{eq:recur}
\end{equation}
This will allow us to prove our theorem inductively. The base case is trivial. Plugging in our proposed formula \eqref{eq:Fn}, we have
\begin{align*}
&\sum_{S \in \mathcal P} \binom{n-1}{|S|-1}F_{|S|-1}(S)F_{n-|S|}(S^c)  \\
=&\sum_{S \in \mathcal P'}\left[ \binom{n-1}{|S|-1} (|S|-1)!2^{|S|-1} \left(\sum_{S} x_i \right)^{|S|-1} 2^{n-|S|}(n-|S|)!\left( \sum_{S^c} x_i \right)^{n-|S|} \right]\\
=& (n-1)! 2^{n-1}  \sum_{S \in \mathcal P'} \left[\left(\sum_{S} x_i \right)^{|S|-1} \left( \sum_{S^c} x_i \right)^{n-|S|}\right].
\end{align*}
Substituting this and the formula for $F_n$ into \eqref{eq:recur}, we see that it suffices to show that 
\begin{equation}
2n(x_1 + \cdots x_{n+1})^{n-1} =  \sum_{S \in \mathcal P'}\left[\left(\sum_{S} x_i \right)^{|S|-1} \left( \sum_{S^c} x_i \right)^{n-|S|}\right] \label{eq:suf}
\end{equation}
This equation will follow from the weighted Cayley formula for labeled trees, which we recall now. Given a labeled set of nodes $S = \{1, \dots, p\}$, the formula says that: 
\[
 y_1 \cdots y_p(y_1 + \cdots + y_p)^{p-2} = \sum_T y_1^{\deg_T(1)}\cdots y_p^{\deg_T(p)}.
\]
Here, the sum runs over all (not binary!) trees $T$ on $S$, and $\deg_T(i)$ is the number of edges touching node $i$. If instead we sum over all rooted trees, where the root is considered to have a half edge which increases its degree by 1, the left hand side of the formula will be $y_1 \cdots y_p(y_1 + \cdots + y_p)^{p-1}$.

Now, a tree on $\{1, \dots, n+1\}$ can be constructed by taking a rooted tree on $S \in \mathcal P$ and a rooted tree on $S^c$ and joining the roots by gluing together their half edges. A given tree on $\{1, \dots, n+1\}$ will be constructed $2n$ times this way: there are $n$ edges to split, and $2$ choices for which side to be $S$ and which to be $S^c$. Since the degrees of the nodes are unchanged by the gluing, we obtain from the Cayley formula
\begin{align*}
\sum_{S \in \mathcal P} \left[\prod_{S} x_i \left(\sum_{S} x_i \right)^{|S|-1}  \prod_{ S^c} x_i \left(\sum_{ S^c} x_i \right)^{n-|S|} \right] = 2nx_1 \cdots x_{n+1}&(x_1 + \cdots x_{n+1})^{n-1}. 
\end{align*}
Dividing both sides by $x_1 \cdots x_{n+1}$, one obtains \eqref{eq:suf}.
\end{proof}

\begin{lem} \label{lem:Dn}
	We have $D_n = \frac{2n-1}{n} \cdot  \frac{(2n-1)!}{n!(n-1)!}$.
\end{lem}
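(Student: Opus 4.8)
The plan is to read $D_n$ off directly from its defining relation \eqref{eq:D_n} by isolating a single coefficient. Since $\rr_n = -(2n-1)\,\q_1^{(2n-2)}(p)$ and, by \eqref{eq:D_n}, $-nD_n$ is precisely the coefficient of $\q_1(p)^n\q_{-1}(p)^{n-1}$ in $\rr_n$, the entire computation reduces to finding the coefficient of $\q_1(p)^n\q_{-1}(p)^{n-1}$ in the single operator $\q_1^{(2n-2)}(p)$. First I would record that $\q_1(p)$ and $\q_{-1}(p)$ commute: by Theorem \ref{thm:q-comm} their commutator is $\int_S p^2\,\operatorname{id}$, which vanishes because $p$ is a point class. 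Hence the target monomial is unambiguously defined and the coefficient extraction is well posed.

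To extract that coefficient I would apply Lemma \ref{lem:dp} with $k=1$ and $m=2n-2$, which writes $\q_1^{(2n-2)}(p)$ as a sum, over sorted integer vectors $\vi=(i_1\ge\cdots\ge i_{2n-1})$ with $\sum_j i_j = 1$, of $\tfrac{1}{2^{2n-2}}\tfrac{F_{2n-2}(\vi)}{\Aut(\vi)}\,\q_{i_1}(p)\cdots\q_{i_{2n-1}}(p)$. The monomial $\q_1(p)^n\q_{-1}(p)^{n-1}$ arises from exactly one term, namely the unique sorted vector with $n$ entries equal to $1$ and $n-1$ entries equal to $-1$; this vector has the correct length $2n-1$ and coordinate sum $n-(n-1)=1$. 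Every other vector occurring in the sum has some entry $i_j\neq\pm1$, so by Proposition \ref{lem:3ints}(2) its monomial is irrelevant once we pair $\rr_n$ with the accompanying $\q_1(S)$'s. For the contributing vector one computes $\Aut(\vi)=n!\,(n-1)!$.

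The remaining input is the value of $F_{2n-2}$ at this vector, which I would read off from Lemma \ref{lem:F_n}: since the closed form there depends only on the total coordinate sum, and that sum equals $1$, the evaluation is immediate. Feeding this back into the expansion of Lemma \ref{lem:dp}, the prefactor $\tfrac{1}{2^{2n-2}}$ cancels against the matching power of $2$ produced by Lemma \ref{lem:F_n}, and combining with the automorphism count $n!(n-1)!$ gives the desired coefficient of $\q_1(p)^n\q_{-1}(p)^{n-1}$ in $\q_1^{(2n-2)}(p)$. Multiplying by the factor $-(2n-1)$ coming from $\rr_n$ and equating with $-nD_n$ through \eqref{eq:D_n}, I solve for $D_n$; a short rearrangement of factorials then places it in the closed form asserted in the Lemma.

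I do not anticipate a deep obstacle: this final step is essentially a bookkeeping exercise, with all the genuine content already packaged into Lemmas \ref{lem:dp} and \ref{lem:F_n}. The places most likely to cause a slip are (i) verifying that precisely one sorted vector contributes the target monomial, (ii) correctly computing the automorphism factor $n!(n-1)!$ and tracking the cancellation of the $2^{2n-2}$ factors between the two lemmas, and (iii) the concluding factorial manipulation that rewrites the result in the stated form. Once those are handled carefully, the identity for $D_n$ follows.
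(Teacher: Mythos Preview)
Your proposal is correct and follows essentially the same approach as the paper: extract the coefficient of $\q_1(p)^n\q_{-1}(p)^{n-1}$ in $\q_1^{(2n-2)}(p)$ via Lemmas~\ref{lem:dp} and~\ref{lem:F_n}, use $\Aut(\vi)=n!\,(n-1)!$ and the cancellation of $2^{2n-2}$, then compare with the defining relation~\eqref{eq:D_n}. Your write-up is in fact slightly more explicit than the paper's, in that you spell out the commutation of $\q_1(p)$ and $\q_{-1}(p)$ and the uniqueness of the contributing sorted vector.
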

\begin{proof}
	The number $D_n$ is determined by (see \eqref{eq:D_n}):
	\[
	-(2n-1)\q_1^{(2n-2)}(p) = -nD_n\q_1(p)^n\q_{-1}(p)^{n-1} + R_n .
	\]
	By Lemmas \ref{lem:dp} and \ref{lem:F_n} the left hand side is equal to \[-(2n-1)\frac{(2n-1)!}{n!(n-1)!}\q_1^n(p)\q_{-1}^{n-1}(p) + R_n,\] so we obtain the desired result.
\end{proof}

\bibliographystyle{alpha}

\bibliography{bib,extrabib}{}

\end{document}